\documentclass[11pt]{amsart}
    
\usepackage{amssymb}
\usepackage{lscape}

\usepackage{mathpazo}

\usepackage{mathrsfs}

\usepackage[all]{xy}
\usepackage[headings]{fullpage}
\usepackage{paralist}

\usepackage{tabularx,ragged2e,booktabs,caption}
\usepackage[colorlinks=true,linkcolor=blue, citecolor=blue, %
                urlcolor=blue, pagebackref=true]{hyperref}

\makeatletter

\newcommand{\arxiv}[1]{\href{http://arxiv.org/abs/#1}{{\tt arXiv:#1}}}

\newcounter{maintheorem}[equation]
\def\themaintheorem{\thesection.\@arabic \c@maintheorem}
\@addtoreset{equation}{section}
\def\theequation{\thesection.\@arabic \c@equation}

\def\theenumi{\@alph\c@enumi}
\def\theenumii{\@roman\c@enumii}

\makeatother

\theoremstyle{plain}
\newtheorem{theorem}[equation]{Theorem}
\newtheorem*{theorem*}{Theorem}
\newtheorem{lemma}[equation]{Lemma}

\newtheorem{proposition}[equation]{Proposition}

\theoremstyle{definition}

\newenvironment{customthm}[1]
  {\innercustomthm}
  {\endinnercustomthm}

\newtheorem{remark}[equation]{Remark}

\newtheorem{example}[equation]{Example}

\newtheorem{definition}[equation]{Definition}

\newtheorem{notation}[equation]{Notation}

\newtheorem{discussion}[equation]{Discussion}

\newtheorem{observation}[equation]{Observation}

\newtheorem{construction}[equation]{Construction}

% Fonts begin

\newcommand{\ov}{\overline}

\newcommand{\D}{\mathcal{D}}

\newcommand{\m}{{\mathfrak m}}

\def\to{\longrightarrow}

\DeclareMathOperator{\ann}{Ann}

\DeclareMathOperator{\LT}{LT}

\DeclareMathOperator{\HF}{HF}
\DeclareMathOperator{\Hom}{Hom}

\DeclareMathOperator{\order}{order}

\DeclareMathOperator{\Hilb}{Hilb}

\def\res{{\rm K}}

\def\sers2{{\res[\![x,y]\!]}}
\def\ser3{{\res[\![x,y,z]\!]}}

\def\pol2{{\res[x,y]}}
\def\pol3{{\res[x,y,z]}}

\makeatletter
\def\RDerChar{\mathbf{R}}
\def\RDer{\@ifnextchar[{\R@Der}{\ensuremath{\RDerChar}}}
\def\R@Der[#1]{\ensuremath{\RDerChar^{#1}}}
\makeatother
\usepackage[normalem]{ulem}

%%%%%%%Shree New Commands%%%%%%%%%%%%%
\newcommand{\nn}{\mathbf{n}}
\newcommand{\N}{\mathbb{N}}
\newcommand{\DD}{\mathfrak{D}}

%%%%% Some garbage for colorful comments by JJ %%%%%
\usepackage{xcolor}
%
%
%
%\newcommand{\joac}[1]{\textcolor{blue}{#1}}

%%%%%% TITLE PAGE %%%%%%%%%%%%%

\title[The Hilbert functions of local complete intersections]
{On the Hilbert function of Artinian local complete intersections of codimension three}

\author[Jelisiejew]{Joachim Jelisiejew}
\address{Faculty of Mathematics,  Informatics and Mechanics, University of Warsaw
Room 1310,  Banacha 2,  Warsaw,  Poland}
\email{jjelisiejew@mimuw.edu.pl}

\author[Masuti]{Shreedevi K. Masuti}
\address{Department of Mathematics, Indian Institute of Technology Dharwad, WALMI Campus, PB Road, Dharwad - 580011, Karnataka, India}
\email{shreedevi@iitdh.ac.in}

\author[Rossi]{M.~E.~Rossi}
\address{Dipartimento di Matematica, Universit{\`a} di Genova, Via Dodecaneso 35, 16146 Genova, Italy}
\email{rossim@dima.unige.it}

%\date{\today}
\thanks{JJ is partially supported by National Science Centre grants
2020/39/D/ST1/00132 and 2018/31/B/ST1/02857. SKM is supported by INSPIRE
faculty award funded by Department of Science and Technology, Govt. of India.
She was supported by INdAM COFOUND Fellowships cofounded by Marie Curie
actions,  Italy,  during which the work began.  MER is supported by PRIN-MIUR
2020355B8Y. This work is partially supported by the
Thematic Research Programme \emph{Tensors: geometry, complexity and quantum
entanglement}, University of Warsaw, Excellence Initiative – Research
University and the Simons Foundation Award No.~663281 granted to the Institute
of Mathematics of the Polish Academy of Sciences for the years 2021-2023.}

\keywords{Hilbert functions, complete intersection ring,  Gorenstein rings,  Macaulay's inverse system,  symmetric decomposition}
\subjclass[2010]{Primary: 13D40, 13H10; Secondary:13A30,  13J05, 13F20}

%%%%%%%%%%%%%%%%%%%%%%%%%%%%%%%%%%%%
\begin{document}
%%%%%%%%%%%%%%%%%%%%%%%%%%%%%%%%%%%%

%%%%%%%%%%%%%%%%%%%%%%%%%%%%%%%%%%%%
\begin{abstract}
%%%%%%%%%%%%%%%%%%%%%%%%%%%%%%%%%%%%
In singularity theory or algebraic geometry, it is natural to investigate possible Hilbert functions for special algebras $A$ such as local complete intersections or more generally Gorenstein algebras. 
The sequences that occur as {the} Hilbert functions of standard graded complete intersections  are well understood classically thanks to Macaulay and Stanley.
Very little is known in the local case except in codimension two.
In this paper we characterise the Hilbert functions of quadratic Artinian complete intersections of codimension three.  Interestingly we prove that a Hilbert function is admissible for such a Gorenstein ring if and only if is admissible for such a complete intersection.  We provide an effective construction of a local complete intersection for a given Hilbert function.
{We prove that the symmetric decomposition of such a complete intersection ideal is determined by its Hilbert function.}

\end{abstract}
%%%%%%%%%%%%%%%%%%%%%%%%%%%%%%%%%%%%

\maketitle
\section{Introduction}

Let $(A,\m)$ be an Artinian local $K$-algebra where $K\simeq A/\m$. Its most
important \emph{numerical} invariant is the {\it Hilbert function} of its
\emph{associated graded ring} $gr_\m(A):=\oplus_{i\geq 0}\m^i/\m^{i+1}$.
Possible Hilbert functions are characterized by Macaulay's
bound~\cite[4.2.10]{BH98} and called \emph{O-sequences}. For applications, in
particular in singularity theory or algebraic geometry, it is natural to
investigate possible Hilbert functions for special algebras $A$ such as local
complete intersections or more generally Gorenstein algebras. A further recent
important motivation for this question comes from motivic homotopy theory,
where local complete intersections parametrize the algebraic cobordism
spectrum~\cite{On_the_infinite_loop_spaces}.  Determining all possible Hilbert
functions of complete intersections would be a step in analysing the
Bia{\l{}}ynicki-Birula decomposition of the local complete intersection locus
of the Hilbert scheme of points,  which is key in an approach to the Wilson space conjecture ~\cite{On_the_infinite_loop_spaces}.
\vskip 2mm
The {possible} Hilbert functions for {\it graded} complete intersections {are classically known} and depend only on the degrees  of the generators of the homogeneous ideal $I$ and on the embedding dimension \cite{Mac1916}, \cite[Theorem 4.2]{stanley}. 
In the local case  to determine the Hilbert function of a  complete
intersection  is a difficult task because  the associated graded ring does not need to be a complete intersection. 
A  characterization of the Hilbert function of a  {\it local complete  intersection of codimension two} was done by Macaulay, see  \cite{Mac04, Iar94, GHK06,GHK07,Bri77, Ber09, Kot78}.
\vskip 2mm
Complete intersection rings are in particular {\it Gorenstein rings}  and the two notions coincide in codimension two. 
In codimension three Stanley characterized {the possible Hilbert functions of \emph{graded} Gorenstein algebras by proving that they are symmetric and their first difference is an admissible Hilbert function of a codimension two Artinian algebra, see \cite{stanley}}.  {However,  it is an open question which numerical functions can occur as the Hilbert functions of \emph{local} complete intersections or, more in general, of Gorenstein local rings of codimension three, despite Buchsbaum-Eisenbud structure theorem for such rings~\cite{BE77}.} 
 This problem was stated by Iarrobino in \cite{Iar94} and it is  one of the open problems published in occasion of the Joint International Meeting of the AMS, European Mathematical Society (EMS) and Portuguese Mathematical Society (SPM)  Porto, Portugal, June  2015, workshop ``Commutative Artinian Algebras and Their Deformations''.
The first open case is when $I$ is generated by a regular sequence $(f,g,p)$ in $R=\ser3 $ the formal power series ring in three variables where $f$, $g$, $p$ {have nonzero and linearly independent quadratic parts}.  In the current paper we completely resolve this case.

\vskip 2mm
{Let us introduce formally the setup.}
An O-sequence {$h$} is a {\it{complete intersection (resp. Gorenstein) sequence}}  if $h$ is the Hilbert function of some complete intersection (resp. Gorenstein) $K$-algebra $A$ as above.
Now let $A=R/I $ be an Artinian Gorenstein ring where $R=\ser3 $   and $I \subseteq (x,y,z)^2$ an ideal of $R$ such that the Hilbert function $h$ of $A$ satisfies $h_1 = h_2 = 3$.  Write $h$ as a vector $ h= (1,3,3,h_3, ,h_4,\ldots,h_s=1)  $  where  $s$ is the {\it socle degree},  that is   the maximum integer $j$ such that $\m^j \neq 0.$ For simplicity we call an O-sequence  of the form $ h= (1,3,3,h_3, h_4,\ldots,h_s=1)$ a  {\it $(1,3,3)$  sequence}.  In this paper we will characterize the   $(1,3,3)$ sequences which are Gorenstein sequences and, in particular, those which are complete intersection sequences.

\vskip 2mm
 For an O-sequence $h$,  we set $\max h:=\max \{ h_i: i \geq 0\}$ and
\[
\Delta(h):=\max\{|h_i-h_{i-1}|:i=3,\ldots,s\}.
\]
We say that $h$ has a {\it fall} by $m$ at $i$ (resp.~ {a} {\it fall} $m$) if $h_{i+1} = h_i-m$ (resp.  $h_{i+1}=h_{i}-m$ for some $i$).  
By Macaulay's bound, if $h$ is an $(1,3,3)$ sequence,  then $h_3 \leq 4$,  and $h$ {is unimodal.}
The smallest $t$ such that $h_{t+1} < h_t$
is called the {\it peak position} of $h$.
The following is the first main result of the paper.  
  \begin{customthm}{1}
\label{Equi:GorAndCIIntro}  Let $h$ be a $(1,3,3)$ sequence.  Then the following statements are equivalent:
 \begin{enumerate} 
 \item[(i)]   $h$ is a complete intersection sequence;
  \item [(ii)]  $h$ is a Gorenstein sequence;
  \item [(iii)] $h$ satisfies one of the following conditions:\\
(I) $  h_3 \leq 3;$ \\
(II) $h_3 =4$ and $ \Delta(h)=1$; \\
(III) $h_3=4,~\Delta(h)=2$ and $h$ has a {\bf unique} fall by {\bf two} at the {\bf peak} position.  
In this case, $h$ is of the following form:
 $$h_i=\begin{cases}
i+1 & \mbox{ for } i\le d-2\\
 d & \mbox{ for } i =d-1,d,  \dots, d+r-1\\
 d-2 & \mbox{ for } i=d+r\\
 {h_{i-1}\mbox{ or } h_{i-1} - 1} & \mbox{ for } i\geq d+r+1
\end{cases}$$
for some integers $d = \max h \ge 4 $ and  $r \ge 0.$
\end{enumerate} 
       \end{customthm}
\vskip 2mm      
 
It is clear that $(i)$ implies  $(ii) $.  We prove that $(ii) $ implies
$(iii)$ in Section~\ref{sect3} and $(iii) $ implies $(i)$ in Section~\ref{sect4}.  
In each case  we give constructive methods and examples: see Theorem \ref{thm:h3Atmost3} for $h$ of type (I), see Theorem \ref{1334} and  Example \ref{Example:ConsCI} for type (II) and (III). 
We remark that Artinian Gorenstein algebras with Hilbert function $(1, 3, 3, 3, 1)$ have been classified in \cite{Jel17}.

%  \vskip 2mm
%  The  techniques of this paper are mainly  based  on  Matlis duality,  in particular on the use of the Macaulay's inverse system.   
%  The preliminary facts and the basic tools are {given} in Section 2.  

\vskip 2mm
 {After proving Theorem~\ref{Equi:GorAndCIIntro}, we} discuss the
realizable symmetric decompositions of $(1,3,3)$ sequences.  It is well-known
that the Hilbert function of graded Artinian Gorenstein $K$-algebra is
symmetric,  that is,  $h_i=h_{s-i}$ for all $i=0,1\ldots,s.$ This is no longer
true for local Artinian Gorenstein $K$-algebras.  In \cite{Iar94}  Iarrobino
proved that the Hilbert function of Artinian Gorenstein $K$-algebra admits a
symmetric decomposition (see Section \ref{Subsec:Q-decomp}).  It is natural to
ask what  {are the possible} symmetric decompositions  {for} a
complete intersection ideal.  In two variables, where  {being} Gorenstein is
equivalent to  {being a} complete intersection,  Iarrobino showed that the Hilbert function determines the symmetric decomposition~\cite[Theorem 2.2]{Iar94}.  We observe that this is the case also for complete intersection ideals with Hilbert function as $(1,3,3)$ sequence.  This is the second main result of the paper.

  \begin{customthm}{2}
  \label{thm:Q-deco-133}
  Let $h$ be a $(1,3,3)$ complete intersection sequence.  Then the following
  hold for ideals with Hilbert function $h$:
  \begin{asparaenum} 
  \item  {there is a unique possible symmetric decomposition for} a complete intersection ideal,
  \item when $\Delta(h) = 2$,  {there is a unique possible symmetric decomposition for} a Gorenstein ideal,
  \item \label{thm:Q-deco-Gor} {when $\Delta(h) = 1$ there} are two possible
      symmetric decompositions for a Gorenstein ideal.
  \end{asparaenum}
    \end{customthm}

In fact,  we give a precise description of symmetric decompositions for Gorenstein and complete intersection ideals in Propositions \ref{Prop:Q-decOf133} and \ref{Prop:Q-decOf1334}.  Theorem \ref{thm:Q-deco-133} is one of the few known results where complete intersections can be distinguished numerically from Gorenstein algebras.
In~\cite{Iar83} Iarrobino asks for the dimension of the family of complete
intersections with a  given Hilbert function.   {While our results give
partial structure theorems for ideals with the Hilbert functions above}, this
question {remains} open even in those cases.  {In general, it is} now understood to be very non-trivial: while deforming a local complete intersection is trivial, keeping track of the fact that it is local and of its Hilbert function is hard.
\vskip 2mm

We have used the computer algebra systems \cite{Macaulay2, Singular} and the library \cite{Elias} 
for various computations in this paper.

\section{Preliminaries}
In this section we fix the notations and present the basic tools that will be used in the paper.  We also give the references for a more complete background.  

\subsection{Macaulay's inverse system}
\label{Subsec:InvSyst}
As a consequence of Matlis duality, Macaulay's inverse system plays an important role in the construction of  a 
Gorenstein $K$-algebra.

Let $R=K[\![x_1,\ldots,x_m]\!]$ and $\D := K_{DP}[X_1, \ldots ,X_m]$, which we
view as a polynomial ring with $\deg X_i=1$ for all $i$. The vector space $\D$ has a structure of $R$-module
via the \emph{contraction} action, as follows. For $\nn=(n_1,\ldots,n_m),
\nn'=(n_1',\ldots,n_m') \in \N^m,$ $x^{\nn}=x_1^{n_1}\cdots x_m^{n_m} \in R$
and $X^{\nn'}= X_1^{n_1'}\cdots X_m^{n_m'} \in \D$ we define
\begin{equation}\label{Eqn:DividedPower}
  {x^{\nn} \circ X^{\nn'}} :=\begin{cases}
\ X_1^{n'_1-n_1}\cdots X_m^{n'_m-n_m}& \mbox{ if } \nn' \geq \nn\\
0 & \mbox{ otherwise,}
          \end{cases}
\end{equation}
where by $\nn' \geq \nn$ we mean that $n_i' \geq n_i$ for all $i=1,\ldots,m$.
The contraction action is extended $K$-linearly to $R$ and $\D$.
For the experts we note that $\D$ has another, more canonical, ring
    structure of a divided power ring,
    see \cite[Theorem 3]{Nor74} or \cite[Appendix A]{ik99}. To make the paper
    more accessible, we ignore this ring structure as we will not need it.

    On the one hand, to an ideal $I \subseteq R$ we associate an $R$-submodule of $\D$
\[
 I^\perp:=\{F \in \D:f \circ F=0 \mbox{ for all }f\in I\}.
\]
This submodule of $\D$ is called the \emph{Macaulay's inverse system of $I$}. On the
other hand,  for an $R$-submodule $W$ of $\D,$ its
    \emph{annihilator} is an ideal of $R$
defined as follows:
\[
 \ann_R(W):=\{f \in R:f \circ F=0 \mbox{ for all }F \in W\}.
\]
For $F_1,\ldots,F_k\in \D,$ we write $\langle F_1,\ldots,F_k \rangle$ for the
$R$-submodule of $\D$ generated by $F_1,\ldots,F_k$ and
$\ann_R(F_1,\ldots,F_k)$ for $\ann_R(\langle F_1,\ldots,F_k\rangle)$. For $F
\in \D,$ we set $A_F:=A/\ann_R(F)$ and call it the \emph{apolar algebra}
of $F$.
\vskip 2mm

By Matlis duality,  $R/I$ is Artinian Gorenstein  $K$-algebra if and only if $ (R/I)^\vee \cong I^\perp $ is a cyclic $R$-submodule of $\D$ (see \cite[Section 3.2]{BH98}). 
Emsalem in \cite[Proposition 2]{Ems78} and Iarrobino in \cite{Iar94}, based on the work of Macaulay in \cite{Mac1916}, gave a more precise description of the inverse system of Artinian Gorenstein $K$-algebras:

\begin{proposition}  
\label{Prop:Emsalem-Iarrobino}
	There is a one-to-one correspondence between ideals $I$ such that $R/I$ is an Artinian Gorenstein local  $K$-algebra of socle degree $s$ and cyclic $R$-submodules of $\D$ generated by   a nonzero {polynomial} of degree $s.$ The correspondence is defined as follows:
	\begin{eqnarray*}
		\left\{  \begin{array}{cc} I \subseteq R \mbox{ such that } R/I \mbox{ is an Artinian } \\
			\mbox{Gorenstein local $K$-algebra of }\\
			\mbox{ socle degree $s$ } 
		\end{array} \right\}  \ &\stackrel{1 - 1}{\longleftrightarrow}& \ 
		\left\{ \begin{array}{cc} \langle F \rangle \subseteq \D \mbox{ submodule generated by a nonzero}\\
			 \mbox{ polynomial $F$ of degree }  s \\
		\end{array} \right\} \\
        I\subseteq R \ \ \ \ \ &\longrightarrow& \ \ \ \ \ I^\bot  \ \ \ \ \ \ \ \ \qquad \ \ \ \  \ \ \ \qquad \ \ \ \  \ \ \ \\
        \ann_R(W)\subseteq R \ \ \ \ \  &\longleftarrow& \ \ \ \ \  W \ \ \ \ \ \ \ \ \ \ \ \ \ \ \ \  \qquad \qquad \ \ \ \  \ \ \
	\end{eqnarray*}
\end{proposition}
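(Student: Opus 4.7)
The plan is to derive this bijection from Matlis duality for the complete local Noetherian ring $R = K[\![x_1,\ldots,x_m]\!]$. The first and most delicate step is to identify $\D$, equipped with the contraction action, with the injective hull $E_R(K)$ of the residue field. I would verify that $\D$ is $\m$-torsion with socle the one-dimensional space $K \cdot 1$, that the inclusion $K \hookrightarrow \D$ is essential (for any nonzero $F = \sum c_{\nn'} X^{\nn'} \in \D$ with some $c_{\nn'} \neq 0$, contraction by $x^{\nn'}$ yields the nonzero constant $c_{\nn'} \in K$), and that $\D$ is divisible, hence injective over $R$.

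With $\D = E_R(K)$ in hand, the Matlis dual functor is $(-)^\vee = \Hom_R(-,\D)$, and general Matlis duality gives an order-reversing bijection between ideals $I \subseteq R$ with $R/I$ Artinian and finitely generated $R$-submodules $W \subseteq \D$. Concretely, the natural isomorphism $(R/I)^\vee \cong I^\perp$ together with the double-annihilator relations $\ann_R(I^\perp) = I$ and $(\ann_R(W))^\perp = W$ identify the assignments $I \mapsto I^\perp$ and $W \mapsto \ann_R(W)$ as mutual inverses, so the task reduces to cutting down to the Gorenstein subfamily and matching the socle degree with the degree of the generating polynomial.

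For the restriction to Gorenstein, I would use that $A = R/I$ is Artinian Gorenstein iff $\dim_K \socle(A) = 1$. Under Matlis duality $\socle(A) = (0 :_A \m)$ corresponds to the cosocle $I^\perp/\m\cdot I^\perp$ of the dual, so $A$ is Gorenstein iff $I^\perp$ is cyclic, say $I^\perp = \langle F \rangle$. For the socle degree, if $s$ is the largest integer with $\m^s \not\subseteq I$, then $\m^{s+1}$ annihilates the generator $F$ while $\m^s$ does not, which by the explicit contraction formula in \eqref{Eqn:DividedPower} is equivalent to $F$ being a polynomial whose top degree component sits in degree exactly $s$. Conversely, any cyclic submodule generated by a polynomial of degree $s$ yields an Artinian Gorenstein quotient of socle degree $s$.

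The main obstacle is the initial identification of $\D$ with $E_R(K)$: the contraction action is defined combinatorially on polynomials and must be shown to extend to a well-defined action of the completed power series ring $R$ that makes $\D$ both injective and essential over its socle. Once this foundational step is handled, the Gorenstein cut and the socle-degree bookkeeping follow formally from standard Matlis duality together with the elementary contraction calculation described above.
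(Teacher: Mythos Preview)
The paper does not actually prove this proposition: it is stated as a preliminary result with references to Emsalem~\cite{Ems78}, Iarrobino~\cite{Iar94}, and Macaulay~\cite{Mac1916}, preceded only by the one-line remark that by Matlis duality $R/I$ is Artinian Gorenstein iff $I^\perp$ is cyclic (citing~\cite[Section~3.2]{BH98}). Your proposal is precisely an expansion of that Matlis-duality route, so there is nothing to compare at the level of strategy; you are filling in what the paper leaves to the literature.

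One technical point in your sketch deserves care. The implication ``divisible $\Rightarrow$ injective'' is \emph{not} valid over a general commutative Noetherian ring (it already fails over $K[x,y]$), so that step as written is a gap. To establish $\D \cong E_R(K)$ you should instead either verify Baer's criterion directly, or argue that $\D = \bigcup_{n} (0:_{\D}\m^n)$ with each layer $(0:_{\D}\m^n)$ canonically identified with $\Hom_K(R/\m^n,K)$, and then invoke the standard description of $E_R(K)$ as the direct limit of these Matlis duals; this is essentially the content of Northcott's paper~\cite{Nor74} already in the bibliography. Once $\D \cong E_R(K)$ is in place, the rest of your argument (the socle/cosocle duality giving the Gorenstein characterisation, and the contraction computation matching socle degree with $\deg F$) is correct and standard.
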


In this paper we are interested in Artinian $K$-algebras of codimension three. Hence for simplicity we set $R=K[\![x,y,z]\!]$ and $\D:=K_{DP}[X,Y,Z].$

\subsection{Hilbert function and symmetric decomposition}
\label{Subsec:Q-decomp}
The Hilbert function of a local ring $A$ with maximal ideal $\m$ and residue field $K \simeq A/\m$ is defined as follows: for every $t \ge 0$
$$
\HF_A(t) =   \dim_\res \left(\frac{\m^t}{\m^{t+1}}\right).
$$

Then $\HF_A(t)$ is equal to the minimal number of generators of the ideal $\m^t$ and  the Hilbert function of the local ring $A$ is the Hilbert function of the standard graded algebra $$gr_{\m}(A)=\oplus_{t\ge 0}\ \m^t/ \m^{t+1}.$$ This algebra is called {\it{the associated graded ring of the local ring}}  $(A,\m)$.  {It has a geometric interpretation in} the case when $A$ is the localization at the origin O of the coordinate ring of an affine variety $V$ passing through O: then $gr_{\m}(A)$ is the coordinate ring of the {\it tangent cone } of $V$ at O, which is the cone composed of all lines that are the limiting positions of secant lines to $V$ in O.

\vskip 2mm
It is possible to compute the Hilbert function of $A=R/I $ via the inverse system.  Namely, with the previous notation, 
\begin{equation} \label{H2}
\HF_{R/I}(i)= \dim_K    (I^{\perp})_i 
\end{equation}
where 
\begin{equation*} \label{eqn:DualOfI}
 (I^{\perp})_i := {\frac{ I^{\perp} \cap \D_{\le i} +  \D_{< i}}{ \D_{< i}}}.
\end{equation*}
 
 \newcommand{\mari}[1]{\textcolor{red}{#1}}
  
  {If $A$ is Artinian, then $\HF_{A}(i)=0$ for all $i>s$ where $s$ is the socle degree of $A$. In accordance with the notation of Section 1, we recall that the Hilbert function is represented by a vector $h$ where $h_i=\HF_{A}(i).$}

It is well-known that the Hilbert function of an Artinian graded Gorenstein $K$-algebra is symmetric. { This is no longer} true in the local case. The problem comes from the fact that, in general, the associated algebra $G:=gr_{\m}(A)$ of {an Artinian} Gorenstein local algebra $A$ is no longer Gorenstein. However, in  \cite{Iar94} Iarrobino proved that the Hilbert function of an Artinian Gorenstein local $K$-algebra $A$ admits a ``symmetric'' decomposition. To be more precise,   
consider a filtration  of $G $  by a descending sequence of ideals:
$$ G = C(0) \supseteq C(1) \supseteq \dots \supseteq  C(s)=0,  $$
where 
$$C(a)_i:=\frac{(0:\m^{s+1-a-i}) \cap \m^i}{(0:\m^{s+1-a-i})\cap \m^{i+1}}.$$
Let $ Q(a)= C(a)/C(a+1),$ then $\{Q(a) : a=0,\ldots,s-1\}$
is called the {\it symmetric decomposition} of the associated graded ring $G$.  We write $\DD(a)$ for the Hilbert function of $Q(a)$ and say that $\mathfrak{D}_A=(\DD(0),\DD(1),\ldots,\DD(s-2))$ is the symmetric decomposition of $A$ or of $\HF_{A}.$   The latter is an abuse of language: $\DD(a)$ cannot be reconstructed from $\HF_A$. We simply write  $\DD$ for $\DD_A$ if $A$ is clear from the context.  By a symmetric decomposition of an ideal $I$ we mean the symmetric decomposition of ${R/I}.$ We have
\begin{eqnarray*}
\HF_A(i)=\dim_K G_i =\sum_{a=0}^{s-1} \DD(a)_i.  
\end{eqnarray*} 
We say that a symmetric decomposition $\mathfrak{D}$ of a given Gorenstein sequence is realizable if there exists a Gorenstein ideal with the symmetric decomposition $\mathfrak{D}$.
Iarrobino \cite[Theorem 1.5]{Iar94} proved that if $A=R/I$ is a Gorenstein
local ring then for all $a=0,\ldots,s-1,$ $Q(a)$ is a reflexive graded
$G$-module, up to a shift in degree: $\Hom_K(Q(a)_i,K) \cong Q(a)_{s-a-i}.$
Hence the Hilbert function of $Q(a)$ is symmetric about $\frac{s-a}{2}.$
Moreover, he showed that $Q(0) = G/C(1) $ is the unique graded Gorenstein
quotient of $G$ with socle degree $s$. Let  $f={f_s}+{\text{ lower degree terms...} } $ be a polynomial in $\D$ of degree $s$ where  ${f_s} $ is the homogeneous part of degree $s$   and consider $A_f$  the corresponding  Gorenstein local $K$-algebra. Then, $Q(0) \cong R/\ann_R({f_s})$, see \cite[Proposition 7]{Ems78} and \cite[Lemma 1.10]{Iar94}. See also \cite{IM20} for a  useful discussion on the symmetric decomposition.

\subsection{Hilbert function and standard bases}
\label{Subsec:LT}
Let  $ I$ be an ideal of $R=K[\![x_1,\ldots,x_m]\!]$  and consider the local ring $A=R/I$ whose maximal ideal is   $\m:= (x_1, \ldots ,x_m)/I$. Since the Hilbert function of a local ring $A$ is the same as that of the associated graded ring $gr_{\m}(A), $   it will be useful to recall the presentation of this standard graded algebra.  Let $P=K[x_1,\ldots,x_m]$ be the standard graded polynomial ring.   For every power series  $f\in R\setminus \{0\}$ we can write $f={f_v+f_{v+1}} +\cdots$, where ${f_v}$ is not zero and ${f_j}$ is a homogeneous polynomial of degree $j$ in $P$ for every $j\ge v.$
We say that $v$ is \emph{the order} of $f$,  {we denote $f_v$} by $f^*$ and call it \emph{the initial form} of $f.$
 If $f=0$ we agree that its order is $\infty.$
 It is well known that $gr_{\m}(A) {\simeq} P/I^*$,  where $I^*$ is the homogeneous ideal of  $P  $ generated by the initial forms of the elements of $I.$
 A set of power series $f_1,\cdots,f_r\in I$ is a \emph{standard basis} of $I$ if $I^*=(f_1^*,\cdots,f_r^*). $ 
 Every ideal $I$ has a standard basis and every standard basis is a system of generators of $I$.
  However,  not every system of generators is a standard basis.
  To determine a standard basis of a given ideal of $R$ is {not straightforward, the task is similar in flavour to computing a Gr\"obner basis.}
 We denote by $T$ the set of terms or monomials of $P$; let $\tau$ be a term ordering in $T.$ 
We define a new total order ${\ov{\tau}}$ on $T$ in the following way:
given $m_1, m_2\in T$
we let
$m_1 >_{{\ov{\tau}}} m_2$ if and only if $\deg(m_1) < \deg(m_2)$ or $\deg(m_1) = \deg(m_2)$ and $m_1 >_{\tau} m_2$.
\smallskip
Given $f\in R, $  there is a monomial which is the maximum of the monomials in
the support of $f$ with respect to ${\ov{\tau}}$: namely,  since the support of $f^*$ is a finite set, we can take  the maximum with respect to $\tau$ of the elements of this set. This monomial is called the leading monomial of $f$ with respect to ${\ov{\tau}}$ and is denoted by $\LT_{{\ov{\tau}}}(f).$ By definition, we  have   
$$\LT_{{\ov{\tau}}}(f)=\LT_{\tau}(f^*).$$
As usual,  we define the leading term ideal associated to an ideal $I\subset R$ as the   monomial ideal $ \LT_{\ov{\tau}}(I) $  generated in $P$ by $\LT_{\ov{\tau}}(f)$ with $f$ running in $ I, $ see \cite{GP08}.
\smallskip
In \cite{Ber09} a  set $\{ f_1, \dots , f_r\} $ of elements of $I$  is called an
{\it{enhanced}}  standard basis of $I$  if the corresponding leading terms generate $\LT_{\ov{\tau}} (I).$ Every enhanced standard basis is also a
standard basis, but the converse is not true: an example is given by the ideal $I=(x^2+y^2,xy+y^3)$ in the power series ring $K[\![x,y]\!].$
In \cite{GP08} an enhanced standard basis of $I$  is simply called a standard basis.
We have $\LT_{\ov{\tau}}(I)  =\LT_{\tau}(I^*)$,  see \cite[Proposition~1.5]{Ber09}, so that
\[
 \HF_{R/I}=\HF_{P/I^*}=\HF_{P/\LT_{{\ov{\tau}}}(I)}.
\]
In the theory of enhanced standard {bases} a crucial result is Grauert's
Division theorem \cite[Theorem 6.4.1]{GP08}. It claims the following: given a set of  formal power series $f, f_1,\cdots, f_m \in R$
there exist power series $q_1,\dots,q_m,r \in R$ such that $f=\sum_{j=1}^mq_jf_j+r$ and, for all $j=1,\dots,m$,
\begin{enumerate}
\item[(1)]
No monomial of $r$ is divisible by $\LT_{{\ov{\tau}}}(f_j)$,

\item[(2)]
 $\LT_{{\ov{\tau}}}(q_jf_j)\le \LT_{{\ov{\tau}}}(f)$ if $q_j\neq 0.$
\end{enumerate}

\noindent
Using this we can obtain in the formal power series ring all the properties of Gr\"{o}bner bases analogous to those proved in the classical case for polynomial rings.

\section{necessary conditions  for  Gorenstein sequences }\label{sect3}

In this section  we give a necessary condition for a $(1,3,3)$ sequence  to be Gorenstein.  In particular we prove $(ii) \implies (iii)$ of Theorem \ref{Equi:GorAndCIIntro}.  By Macaulay's bound we know that if $h$ is a $(1,3,3)$ sequence, then $h_3 \leq 4. $ If  $h_3 \leq 3,$ the condition   $(iii)$ of Theorem \ref{Equi:GorAndCIIntro} is clear,   hence in this Section from now onwards {we} assume that $h$ is a $(1,3,3,4)$ sequence, that is $h_3=4.$

Throughout this section we fix a term order $\tau$ on the set of monomials of $P=K[x,y,z]$ such that $z>y>x.$

\begin{lemma}
\label{Lemma:reductionOFfghStronger}
Let $h$ be a $(1,3,3,4)$ sequence   and let $A=R/I$ be an Artinian algebra with the Hilbert function $h$.
Then there exist $f,g,p \in I$ such that up to a change of coordinates the
quadratic parts of $f$, $g$, $p$ are $xz$, $yz$, $z^2$, respectively.
\end{lemma}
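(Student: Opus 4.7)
My plan is to reduce the lemma to showing that, after a linear change of coordinates, the degree-$2$ part $V := (I^{*})_2$ of the initial ideal equals $zP_1 = \langle xz, yz, z^2 \rangle$. Once this is established, since $V$ is spanned by the initial forms of order-$2$ elements of $I$, we may choose $f, g, p \in I$ of order $2$ with $f^{*} = xz$, $g^{*} = yz$, $p^{*} = z^2$, and these initial forms are precisely the quadratic parts of $f, g, p$.

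First I would record the dimension data: from $h_2 = 3$ we get $\dim_K V = 6 - 3 = 3$, and from $h_3 = 4$ we get $\dim_K (I^{*})_3 = 10 - 4 = 6$. Setting $J := \langle V \rangle \subseteq P$, the graded ideal generated by $V$, we have $J_3 = P_1 \cdot V \subseteq (I^{*})_3$, so $\dim_K P_1 V \leq 6$. Conversely, Macaulay's bound for the Hilbert function of $P/J$ gives $\HF_{P/J}(3) \leq 3^{\langle 2 \rangle} = 4$, i.e.\ $\dim_K P_1 V \geq 6$. Hence $\dim_K P_1 V = 6$ and $\HF_{P/J}(3) = 4$. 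Since $J$ is generated in degree $2$ and the Macaulay bound is attained in degree $3$, Gotzmann's persistence theorem yields $\HF_{P/J}(i) = i + 1$ for every $i \geq 2$. The Hilbert polynomial of $P/J$ is therefore $t+1$, which characterizes a $1$-dimensional subscheme of $\mathbb{P}^2 = \Proj P$ of degree $1$---that is, a line $L = \{\ell = 0\}$ for some linear form $\ell \in P_1$. Consequently the saturation of $J$ is the principal ideal $(\ell)$, and in particular $V = J_2 \subseteq (\ell)_2 = \ell P_1$; since $\dim V = 3 = \dim \ell P_1$, we conclude $V = \ell P_1$. A linear change of coordinates in $R$ sending $\ell$ to $z$ completes the reduction.

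The principal technical point is the implication ``$\dim P_1 V = 6 \Rightarrow V = \ell P_1$,'' which I expect to be the main obstacle. Gotzmann's persistence combined with the geometric classification of $1$-dimensional degree-$1$ subschemes of $\mathbb{P}^2$ as lines achieves this cleanly. An elementary alternative would be a case analysis on the base locus $B$ of the net of conics cut out by $V$ in $\mathbb{P}^2$: if $B$ is $1$-dimensional it must be a line, forcing $V = \ell P_1$ by dimension; if $B$ is $0$-dimensional then $|B| \leq 3$, the subcase of $3$ collinear points again reduces to $V = \ell P_1$, and the remaining subcases ($\leq 2$ points, or $3$ non-collinear points) can be ruled out by a direct calculation showing $\dim P_1 V \geq 7$, contradicting the hypothesis. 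The Gotzmann route is however more concise.
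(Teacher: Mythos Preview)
Your proof is correct and follows the same overall strategy as the paper: both reduce to showing that the three-dimensional space $V=(I^*)_2$ equals $\ell P_1$ for some linear form $\ell$, using that $\HF_{P/J}(3)$ hits Macaulay's bound. The only difference is the tool used for the key implication ``$\dim P_1V=6\Rightarrow V=\ell P_1$'': the paper enlarges $J$ by all linear forms $\ell$ with $\ell P_1\subset J$ to obtain a $2$-saturated ideal $J'$ and invokes \cite[Lemma~2.9]{cjn13} to get $\HF_{P/J'}(1)=2$, whereas you use Gotzmann persistence to compute the Hilbert polynomial of $P/J$ as $t+1$ and then identify the saturation geometrically as a line. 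Your route is arguably more self-contained (Gotzmann is textbook material), while the paper's route is slightly more direct once the cited lemma is granted; both are short and entirely equivalent in substance.
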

\begin{proof}
    Since $h_2 = 3$,  there exist elements $f, g, p\in I$ whose quadratic
    parts are linearly independent. Consider the graded ideal $I^*\subseteq P$
    and its graded subideal $J = (f^*, g^*, p^*)$. We have $\HF_{P/J}(2) = 3$
    and $\HF_{P/J}(3) \geq \HF_{P/I^*}(3) = \HF_{R/I}(3) = 4$. By Macaulay's
    bound, we have $\HF_{P/J}(3)\leq 4$, hence $\HF_{P/J}(3) = 4$. Let $J'$ be
    the ideal generated by $J$ and linear forms $\ell\in P_1$, if any, such
    that $P_1\ell \subset J$. Thanks to this last condition, we have
    $\HF_{P/J'}(i) = \HF_{P/J}(i)$ for all $i\geq 2$. In the language of~\cite{cjn13} the
    ideal $J'$ is $2$-saturated,  so by \cite[Lemma~2.9]{cjn13} we obtain $H_{P/J'}(1) = 2$.
    Hence $J'_1\neq 0$ and up to coordinate change we assume $J'_1 = Kz$, so
    that $xz, yz, z^2\in J$. Since $J$ is generated by $(f^*, g^*, p^*)$
    up to coordinate change we have $f^*=xz$, $ g^*=yz$, $ p^*=z^2$.
\end{proof}

Below in this section we assume that the change of coordinates from Lemma~\ref{Lemma:reductionOFfghStronger} is already done.
Our key idea in order to classify the  Gorenstein sequences of type $(1,3,3,4)$ is to analyse the inverse system of $ I \cap K[\![x,y]\!]$ in $K_{DP}[X, Y]$. This
reduces the problem to codimension two,  where  we can take advantage of the rich literature (see  \cite{Ber09}, \cite{Bri77}, \cite{GHK06}, \cite{HW20}, \cite{Iar94}, \cite{MR15}, \cite{Mac04}).  We recall the following well-known result on the characterization of Hilbert functions of codimension two complete intersections which we will use frequently in the paper:
the O-sequence $h=(1,2,h_2,\ldots,h_{s-1},h_s=1)$ is a complete intersection (equivalently,  Gorenstein) sequence if and only if 
\begin{eqnarray}
\label{Eqn:codimension2Gsequence}
|h_i-h_{i-1}| \leq 1 \mbox{ for all } i=1,\ldots,s.
\end{eqnarray}
This result was first obtained by Macaulay in \cite{Mac04},  and well-described by Brian\c{c}on in \cite{Bri77} and Iarrobino in \cite{Iar77}.  See \cite[Theorem 4.6B]{Iar84},  \cite[p. ~23]{Iar94} for a recent version of this result.
We refer  to  \cite[Theorem 2.6]{Ber09} for the construction of Gorenstein ideals with the Hilbert function $h$ satisfying \eqref{Eqn:codimension2Gsequence}.

\begin{proposition}
\label{Prop:HFOfJ}
Let $A=R/I$ be an Artinian algebra such that the ideal $I$ contains  elements $f,g,p$ with $\LT_{\ov{\tau}}(f)=xz$, $\LT_{\ov{\tau}}(g)=yz$, $\LT_{\ov{\tau}}(p)=z^2$.   
Set $S:= K[\![x,y]\!]$ and $J=I \cap S.$ Then $S/J$ has the Hilbert function 
 \[
  \HF_{S/J}(i)=\begin{cases}
              2 & \mbox{ if }i=1\\
              \HF_{R/I}(i) & \mbox{ if } i \neq 1.
              \end{cases}
 \]
\end{proposition}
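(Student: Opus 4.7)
The plan is to compute both Hilbert functions as monomial counts via leading term ideals under $\overline{\tau}$, and to show they agree in each degree $i \geq 2$. By the formalism of Section~\ref{Subsec:LT}, $\HF_{R/I}(i) = \HF_{P/\LT_{\overline{\tau}}(I)}(i)$; since $f, g, p \in I$ have $\LT_{\overline{\tau}}$ equal to $xz, yz, z^2$, the monomial ideal $\LT_{\overline{\tau}}(I) \subseteq P$ contains $(xz, yz, z^2)$. Any monomial $x^ay^bz^c$ of degree $i \geq 2$ with $c \geq 1$ is divisible by one of $xz, yz, z^2$ (consider whether $a \geq 1$, $b \geq 1$, or $a = b = 0$ so $c \geq 2$), so monomials in $P_i \setminus \LT_{\overline{\tau}}(I)$ all lie in $Q := K[x,y]$. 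Consequently, for $i \geq 2$:
\[
\HF_{R/I}(i) = \#\{\text{monomials in } Q_i \setminus \LT_{\overline{\tau}}(I)\}, \qquad \HF_{S/J}(i) = \#\{\text{monomials in } Q_i \setminus \LT_{\overline{\tau}}(J)\}.
\]
It then suffices to verify $\LT_{\overline{\tau}}(I) \cap Q = \LT_{\overline{\tau}}(J)$ in degrees $\geq 2$, the inclusion $\supseteq$ being immediate from $J \subseteq I \cap S$.

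For the reverse inclusion, given a monomial $m \in \LT_{\overline{\tau}}(I) \cap Q$ of degree $i \geq 2$, I would pick $u \in I$ with $\LT_{\overline{\tau}}(u) = m$ and apply Grauert's division to $u$ with divisors $f, g, p$:
\[
u = q_1 f + q_2 g + q_3 p + r,
\]
where $r$ has no monomial divisible by $xz, yz, z^2$, and $\LT_{\overline{\tau}}(q_j f_j) \leq_{\overline{\tau}} m$ for each nonzero $q_j$. The decisive observation exploits the $\tau$-order with $z > y > x$: any monomial of degree $i$ with positive $z$-exponent is $\tau$-strictly greater than any monomial in $Q_i$. If some $q_j$ had order exactly $i-2$, then the initial form of $q_j f_j$ would be $q_{j,i-2} \cdot f_j^*$, whose $\tau$-leading monomial $\LT_\tau(q_{j,i-2}) \cdot \LT_\tau(f_j^*)$ contains a factor of $z$ and is therefore $\tau$-strictly above $m \in Q$; this contradicts the Grauert bound. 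Hence each nonzero $q_j$ has order $\geq i-1$, forcing $(q_j f_j)_i = 0$, and the degree-$i$ part of the division identity reads $r_i = u_i$.

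Consequently $r$ has order exactly $i$ with $r^* = u_i$, so $\LT_{\overline{\tau}}(r) = m$. Combined with the no-divisibility condition and $\mathrm{ord}(r) \geq 2$, every monomial of $r$ lies in $Q$, giving $r \in S \cap I = J$, and thus $m \in \LT_{\overline{\tau}}(J)$. Degree $0$ is trivial, and for degree $1$ the contextual hypothesis $I \subseteq (x,y,z)^2$ forces $J \subseteq (x,y)^2$, hence $\HF_{S/J}(1) = 2$. The principal technical point is the order argument in the previous paragraph: translating Grauert's order bound into the sharp condition $\mathrm{ord}(q_j) \geq i-1$ depends on the interplay between $\tau$ (with $z > y > x$) and the $z$-presence in $\LT_\tau(f), \LT_\tau(g), \LT_\tau(p)$.
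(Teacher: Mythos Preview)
Your overall strategy coincides with the paper's: pass to leading-term ideals and use Grauert division with respect to $f,g,p$ to reduce to elements supported in $K[x,y]$. The paper packages the endgame slightly differently (it proves $\LT_{\overline{\tau}}(I)=(xz,yz,z^2)+\LT_{\overline{\tau}}(J)$ and reads off the Hilbert functions from a short exact sequence), but your direct monomial count is equivalent.

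However, your key technical step has a genuine gap. The assertion ``any monomial of degree $i$ with positive $z$-exponent is $\tau$-strictly greater than any monomial in $Q_i$'' is \emph{false} for a general term order with $z>y>x$; it holds for lex but fails, for instance, for degrevlex, where $y^3>_\tau x^2z$. Since the paper only assumes $z>y>x$, you cannot deduce $\mathrm{ord}(q_j)\geq i-1$ this way, and in fact that order bound need not hold.

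The fix is simpler than what you attempted. From Grauert you have $\LT_{\overline{\tau}}(q_jf_j)\leq_{\overline{\tau}} m$, and $\LT_{\overline{\tau}}(q_jf_j)=\LT_\tau(q_j^*)\cdot\LT_\tau(f_j^*)$ is divisible by $z$ while $m\in Q$ is not, so the inequality is strict. Hence every monomial of $\sum q_jf_j$ is $<_{\overline{\tau}} m$, so $m$ survives in $r=u-\sum q_jf_j$ with nonzero coefficient and $\LT_{\overline{\tau}}(r)=m$. Since no monomial of $r$ is divisible by $xz,yz,z^2$ and $\mathrm{ord}(r)=i\geq 2$, we get $r\in S\cap I=J$ and $m\in\LT_{\overline{\tau}}(J)$. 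No bound on $\mathrm{ord}(q_j)$ is needed.
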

\begin{proof}
   {Recall the notation $P = K[x,y,z]$.} We know that 
\[
 \HF_{R/I}= \HF_{P/\LT_{\ov{\tau}}(I)}.
\]
Similarly,
\[
 \HF_{S/J} =\HF_{K[x,y]/\LT_{\ov{\tau}}(J)}.
\]
We have  $xz,yz,z^2 \in\LT_{\ov{\tau}}(I).$  Therefore by Grauert's Division theorem (see Section \ref{Subsec:LT}),  we deduce 
\[
 \LT_{\ov{\tau}}(I)=(xz,yz,z^2) + \LT_{\ov{\tau}}(J).  
\]
Thus we have an exact sequence
\begin{eqnarray*}
 0 \to (\LT_{\ov{\tau}}(J)+(z))/\LT_{\ov{\tau}}(I) \to K[x,y,z]/\LT_{\ov{\tau}}(I) \to K[x,y,z]/(\LT_{\ov{\tau}}(J)+(z))  \to 0.
\end{eqnarray*}
Let $M:=(\LT_{\ov{\tau}}(J)+(z))/\LT_{\ov{\tau}}(I)$. Since $P/(\LT_{\ov{\tau}}(J)+(z)) \cong K[x,y]/\LT_{\ov{\tau}}(J),$ we get
\begin{equation}
\label{Eqn:ExactSeq}
 \HF_{R/I}=\HF_{S/J}+\HF_{M}.
\end{equation}
Now
\begin{eqnarray*}
 M &=&(\LT_{\ov{\tau}}(J)+(z))/((xz,yz,z^2) + (\LT_{\ov{\tau}}(J))) \\
 &\cong & (z)/((z) \cap \LT_{\ov{\tau}}(J)+(xz,yz,z^2))\\
 &\cong & (z)/z((\LT_{\ov{\tau}}(J):_P z)+(x,y,z)) = (P/(x,y,z))(-1).
\end{eqnarray*}
Therefore from \eqref{Eqn:ExactSeq} we get the required result. \qedhere
 \end{proof}

The following is a crucial result in establishing the numerical criteria for a sequence to be Gorenstein.

\begin{proposition}\label{Prop:classificationOfArtinianGorenstein}
Let $R/I$ be an Artinian Gorenstein local ring  with the Hilbert function of type $(1,3,3,4). $  Set $S:=K[\![x,y]\!]$ and $J=I \cap S.$   Then there exist $F,G \in K_{DP}[X,Y]$ such that 
    \begin{enumerate}
     \item \label{it:fstG} $x\circ G, y \circ G\in \langle F\rangle$;
     \item \label{it:sndG} $J^{\perp} = \langle F, G\rangle.$
   \end{enumerate}
 In this case,   $\HF_{S/J}(i)  =
      \HF_{R/I}(i)$   for all $i \neq 1.$
   
   In particular, if $h$ is a Gorenstein sequence of type $(1,3,3,4), $  
   then $\Delta(h) \leq 2,$ and there exists  at most  one position with a  {fall} by two. 
 \end{proposition}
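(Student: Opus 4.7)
The plan is to analyse the Macaulay dual of $R/I$. By Proposition~\ref{Prop:Emsalem-Iarrobino}, $I^{\perp} = \langle H\rangle$ for some $H \in \D = K_{DP}[X,Y,Z]$ of degree $s$. Expanding $H = \sum_{c \ge 0} Z^c H_c$ with $H_c \in K_{DP}[X,Y]$, the identity $f \circ H = \sum_c Z^c(f \circ H_c)$ for $f \in S$ gives $f \in J \iff f \circ H_c = 0$ for every $c$; hence
\[
 J^{\perp} = \sum_{c \ge 0}\langle H_c \rangle_S.
\]
Using $xz, yz, z^2 \in I^{*}$ from Lemma~\ref{Lemma:reductionOFfghStronger}, the degree-$s$ homogeneous part $f_s$ of $H$ is killed at leading order by $xz, yz, z^2$. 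Writing $f_s = p_0 + Z p_1 + Z^2 p_2 + \cdots$ with $p_c \in K_{DP}[X,Y]$ homogeneous of degree $s - c$, the equations $z^2 \circ f_s = xz \circ f_s = yz \circ f_s = 0$ force $p_c = 0$ for $c \ge 1$, so $f_s = p_0 \in K_{DP}[X, Y]$. In particular $\deg H_0 = s$ and $\deg H_c < s - c$ for $c \ge 1$.

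To bound the number of $S$-module generators of $J^{\perp}$, I invoke Matlis self-duality of $R/I$:
\[
 (0 :_{R/I}(x, y))^{\vee} \cong R/(I + (x, y)) \cong K[\![z]\!]/(z^2),
\]
the second isomorphism holding because every element of $I$ has order $\ge 2$ while $p \in I$ with $p^{*} = z^2$ reduces modulo $(x, y)$ to a unit multiple of $z^2$. Hence $\dim_K(0 :_{R/I}(x, y)) = 2$, and the embedding $S/J \hookrightarrow R/I$ forces $\dim_K \socle(S/J) \le 2$. Therefore $J^{\perp}$ has at most two $S$-module generators.

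The central step is to choose $F \in J^{\perp}$ so that $\m_S J^{\perp} \subseteq \langle F \rangle_S$ and $\dim_K J^{\perp}/\langle F \rangle_S \le 1$. Any $G \in J^{\perp} \setminus \langle F \rangle_S$ (or $G = F$, if $S/J$ is already Gorenstein) then automatically satisfies $x \circ G, y \circ G \in \langle F \rangle_S$, and $J^{\perp} = \langle F, G \rangle_S$ follows by Nakayama. Equivalently, in the case $\dim_K \socle(S/J) = 2$, one needs a socle element $\bar g \in \socle(S/J)$ such that the quotient of $S/J$ by $K \bar g$ is Gorenstein; $F$ is then a Macaulay dual generator of this Gorenstein quotient. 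Producing such $\bar g$ is the main technical obstacle: it requires using the constraints on the slices $H_c$ from the first paragraph together with the full equations $f \circ H = g \circ H = p \circ H = 0$ (after, for instance, normalising $f, g, p$ by subtracting suitable $R$-multiples of $p$ so that all pure $z^k$-terms of degree $\ge 3$ are eliminated).

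Finally, the Hilbert function equality $\HF_{S/J}(i) = \HF_{R/I}(i)$ for $i \ne 1$ is Proposition~\ref{Prop:HFOfJ}. Since $\langle F \rangle_S \cong S/\ann_S(F)$ is a cyclic module over the codimension-two ring $S$, it is Artinian Gorenstein, and \eqref{Eqn:codimension2Gsequence} gives $|h_i - h_{i-1}| \le 1$ for its Hilbert function. The surjection $S/J \twoheadrightarrow S/\ann_S(F)$ with one-dimensional kernel forces $\HF_{S/J}(i) = \HF_{S/\ann_S(F)}(i) + \delta_{i, d}$ for some integer $d$, so $|\HF_{S/J}(i) - \HF_{S/J}(i - 1)| \le 2$, with a fall by two possible only at position $d + 1$. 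Combining with the Hilbert function equality yields $\Delta(h) \le 2$ and at most one position with a fall by two.
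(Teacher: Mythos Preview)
Your setup is sound and the final paragraph correctly derives the numerical consequences. The Matlis-duality computation $\dim_K(0:_{R/I}(x,y)) = \dim_K R/(I+(x,y)) = 2$ is a nice way to see that $J^{\perp}$ is at most $2$-generated. But there is a genuine gap at what you yourself label ``the main technical obstacle'': you never actually produce an $F\in J^{\perp}$ with $\m_S J^{\perp}\subseteq\langle F\rangle_S$. Knowing that $J^{\perp}$ has at most two generators is \emph{not} enough; for an arbitrary pair of generators $F,G$ there is no reason that $x\circ G$, $y\circ G$ should lie in $\langle F\rangle$. You describe the ingredients that would be needed but do not carry out the argument, so the proposal is incomplete precisely at the heart of the proposition.

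The paper's proof is direct and constructive, and bypasses the obstacle entirely. After Lemma~\ref{Lemma:reductionOFfghStronger} and Grauert division one normalises $f=xz-U$, $g=yz-V$, $p=z^2-W$ with $U,V,W\in S_{\geq 3}$ (this is exactly the normalisation you allude to). Writing the dual generator as $T=\sum_{n\geq 0}Z^nT_n$ with $T_n\in K_{DP}[X,Y]$, the equation $p\circ T=0$ yields the recurrence $T_{n+2}=W\circ T_n$, so $\langle T_{2n}\rangle\subseteq\langle T_0\rangle$ and $\langle T_{2n+1}\rangle\subseteq\langle T_1\rangle$ for all $n$. Similarly $f\circ T=g\circ T=0$ give $x\circ T_{n+1}=U\circ T_n$ and $y\circ T_{n+1}=V\circ T_n$. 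Setting $F:=T_0$ and $G:=T_1$, the case $n=0$ of these identities is exactly $x\circ G,\,y\circ G\in\langle F\rangle$, and the recurrences immediately give $J^{\perp}=\sum_n\langle T_n\rangle=\langle F,G\rangle$. No socle count is needed; the choice $F=T_0$, $G=T_1$ does all the work.
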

\begin{proof}
   By Lemma~\ref{Lemma:reductionOFfghStronger} the ideal $I$ contains elements of the form $f=xz-U$, $g=yz-V$,
    $p=z^2-W$,  where $U, V, W\in R_{\geq 3}$.   We may assume that $U,V, W
    \in S_{\geq 3}$ by taking into account the particular form of
    $f^*,g^*,p^*$ and the fact that $(x,y,z)^k \subseteq I$ for $k \gg
    0$ {: as
        long as some of $U$, $V$, $W$ depend on $z$, we subtract an  appropriate
    combination of $f$, $g$, $h$. This procedure stops as it increases the
order and the order $\geq k$ part can be discarded}.  
{This is essentially the procedure described by Grauert's Division Theorem quoted  {above, which implies that each of} $U$, $V$ and $W$ can be written as a combination of $f,g,p$ plus a remainder  $r$ where no monomial  of $r$ is divisible by the leading monomials of $f,g,p,$ that is,  by   $xz,yz, z^2.$}

Since $R/I$ is Gorenstein, there exists $T\in K_{DP}[X, Y, Z]$ such that
    $I^\perp=\langle T \rangle $.
    Write
    \begin{equation}\label{eq:Tform}
        T = T_0 + Z\cdot T_1 + Z^2 \cdot T_2 +  \ldots + Z^n\cdot T_n +
        \ldots ,
    \end{equation}
    where $T_i\in
    K_{DP}[X, Y].$
   Since $p \circ T=0$, we obtain $z^2 \circ T = W \circ T$, so that
    \begin{equation}\label{eq:evencontainment}
        T_{n+2} = W \circ T_n\quad\mbox{for all } n\geq 0.
    \end{equation}
    Therefore, $\langle T_{n+2}\rangle \subseteq \langle T_n\rangle$ for all $n\geq 0$. In
    particular, $\langle T_{2n}\rangle \subseteq \langle T_{2n-2} \rangle \subseteq  \ldots  \subseteq
    \langle T_0 \rangle$ and similarly $\langle T_{2n+1}\rangle \subseteq
    \langle T_1\rangle$ for all $n\geq 0$.
    Moreover, $f\circ T = 0$ and $g \circ T = 0$ translate into the conditions
    \begin{equation}\label{eq:oddcontainment}
        x \circ T_{n+1} = U \circ T_n,\quad y \circ T_{n+1} = V \circ
        T_n\quad\mbox{for all }n\geq 0.
    \end{equation}
    Let $F := T_0$ and $G := T_1$. Then by \eqref{eq:oddcontainment} 
    \begin{eqnarray*}
     x \circ G = U \circ F \in \langle F \rangle \mbox{ and } y \circ G= V \circ F  \in \langle F \rangle.
    \end{eqnarray*}
Since $J=I \cap S,$ $J = \ann_R(T) \cap S=\ann_S (T)= \bigcap_{i \geq 0}
    \ann_S(T_i)$.
    As $\langle T_{2n} \rangle \subseteq \langle F\rangle$ and $\langle
    T_{2n+1} \rangle \subseteq \langle G\rangle$, we have $J =
    \ann_S(F)\cap \ann_S(G)$, hence $J^{\perp} = \langle F, G\rangle$ as required.
    By  {\cite[Theorem~2.3]{Ber09}} we have $\Delta \HF_{S/J}\leq 2$.
    By~\eqref{it:fstG}-\eqref{it:sndG} it follows that $\HF_{S/J}(i) = \HF_{S/ \ann_S(F)} (i) $   for all $i$ except one position.
   {By the above cited result, we have $\Delta\HF_{S/\ann_S(F)}\leq 1$,
    so $\HF_{S/J}$ has at most one fall by two.}
    By Lemma \ref{Lemma:reductionOFfghStronger} and Proposition
    \ref{Prop:HFOfJ},   we have $\HF_{R/I}(i)= \HF_{S/J}(i) $  for all $i \ge
    2$, so  {also $\HF_{R/I}$ satisfies $\Delta\HF_{R/I} \leq 2$ with a
    single possible position with a  fall by two.}
    \end{proof}

    {The converse of Proposition \ref{Prop:classificationOfArtinianGorenstein} is  not true.}  That is, let  $F$ and $G$ in $K_{DP}[X,Y]$ be  such that $x \circ G, ~y \circ G \in \langle F \rangle $,  and assume there exists an ideal $I$ in $R$ with the  Hilbert function of type  $(1,3,3,4)$  such that $I \cap S =\ann_S(F,G) $.  Then $I$  is not necessarily  a Gorenstein ideal.  
The following example illustrates this.  
\begin{example}
\label{Example:ConvGor}
Let $I=(xz,yz,z^2-y^3,x^4)$ be an ideal in $R.$ Then $R/I$ is an Artinian local ring with the Hilbert function $h=(1,3,3,4,2,1)$. Consider $F=X^3Y^2, ~G=Y^3 \in K_{DP}[X,Y].$ Then $x \circ G,$ $y \circ G \in \langle F \rangle .$ Also $I \cap S=(x^4,xy^3,y^4)=\ann_S(F,G).$  But $I$ is not a Gorenstein ideal  by Buchsbaum-Eisenbud structure theorem since $I$ is $4$-generated.  
\end{example}
\vskip 2mm
 
The following {simple} example shows that $h$  is not necessarily a  Gorenstein sequence even if $h$ is a $(1,3,3,4)$-sequence with $\Delta(h)=2.$ Recall that in codimension two $\Delta(h)=1 $ was sufficient for being a Gorenstein sequence. 
\begin{example}
\label{Example:jump2}
 Consider the O-sequence $h=(1,3,3,4,3,1)$. Here $\Delta(h)=2,$ but is not a Gorenstein O-sequence since it does not 
 admit a symmetric decomposition.   
\end{example} 

Now we investigate at which position the {fall} by $2$ can occur if $h$ is a $(1,3,3,4)$-sequence with  $\Delta(h)=2.$ For this first we look for the unique position at which $\HF_{S/J}$ and $\HF_{S/\ann_S(F)}$ in Proposition \ref{Prop:classificationOfArtinianGorenstein} possibly differ.

\begin{lemma}
\label{lemma:NumCriForGorSeqNew}
Let $k \geq 3$ and let  $J \subseteq I$ be ideals in $S:=K[\![x,y]\!]$ such that $I$ is a Gorenstein ideal and
\[
    \HF_{S/J}(i)=\begin{cases}
 \HF_{S/I}(i) & \mbox{ for } i \neq k\\
 \HF_{S/I}(i)+1 & \mbox{ for } i=k.
\end{cases}
\]
Then $I$ has a minimal generator of order $k$ and $k$ is the peak position of $\HF_{S/J}$.
\end{lemma}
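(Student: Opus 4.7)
The plan is to turn the Hilbert function condition into the fact that $I/J$ is a simple $S$-module, and then use the codimension two Gorenstein structure of $I$ to locate $k$ both among the orders of minimal generators of $I$ and as the peak of $\HF_{S/J}$. Summing the Hilbert function differences yields $\dim_K(I/J)=1$, so $I/J$ is a simple $S$-module; hence $\m I\subseteq J$, and $J=\m I+K\alpha$ for some $\alpha\in J\setminus\m I$, whose class $[\alpha]$ gives a line in $I/\m I\cong K^2$. I would write $I=(f_1,f_2)$ minimally with orders $v_1\leq v_2$ and, by iteratively replacing $f_2$ by $f_2-hf_1$ whenever $h^*f_1^*=f_2^*$, normalize so that $f_2^*\notin (f_1^*)$ in $K[x,y]$; this process terminates since $S/I$ is Artinian.

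The decisive step is to identify $k$ with the supremum of $\mathrm{ord}(\psi)$ over $\psi\in I\setminus J$. Writing $\psi=b_1f_1+b_2f_2$ and using the freedom to modify $\psi$ by multiples of $\alpha$ and by elements of $\m I$, a case analysis on $[\alpha]$ shows: if $[\alpha]$ is proportional to $[f_2]$, then $\psi\notin J$ forces $b_1(0)\neq 0$, giving order $v_1$; otherwise one can arrange $b_1(0)=0$ and $b_2(0)\neq 0$, producing a representative whose leading form at degree $v_2$ is a nonzero multiple of $f_2^*$, which cannot be cancelled by any element of $\m I$ because $f_2^*\notin(f_1^*)$. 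In either case $k\in\{v_1,v_2\}$, so $I$ has a minimal generator of order $k$.

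For the peak-position claim, I would invoke the shape of $\HF_{S/I}$ forced by the codimension two Gorenstein condition~\eqref{Eqn:codimension2Gsequence}: because $I^*$ admits no minimal generator in degrees strictly between $v_1$ and $v_2$ (otherwise one could construct a minimal generator of $I$ of order less than $v_2$, contradicting minimality), $\HF_{S/I}$ strictly ascends to $v_1$ at position $v_1-1$, stays at $v_1$ through position $v_2-1$, and takes value $\leq v_1-1$ from position $v_2$ onwards. Adding $+1$ at $k=v_1$ produces a bump $(\ldots,v_1,v_1+1,v_1,\ldots)$, while adding $+1$ at $k=v_2$ extends the plateau to include position $v_2$; in both cases $k$ is the first position where $\HF_{S/J}$ strictly decreases.

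The main obstacle is the order-maximisation in the second step: one must rule out that some carefully chosen element of $\m I$ added to the $f_2$-dominant representative cancels its leading form at degree $v_2$ and pushes the order strictly above $v_2$. This is exactly where the normalisation $f_2^*\notin (f_1^*)$, made available by $I$ being a codimension two complete intersection, is indispensable.
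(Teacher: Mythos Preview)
Your approach is essentially the paper's: both deduce $\m_S I\subseteq J$ from $\dim_K I/J=1$, identify $k$ as the order of a minimal generator of $I$, and finish via the known shape of a codimension-two complete intersection Hilbert function. The paper streamlines the middle step by choosing generators adapted to $J$ (take $g_1\in J\setminus\m_S I$ and a complement $g_2$ in $I/\m_S I$, so that $J=(g_1)+\m_S(g_2)$ and comparing $J^*\subseteq I^*$ forces $\mathrm{ord}(g_2)=k$ directly), whereas you fix order-normalised generators $f_1,f_2$ with $f_2^*\notin(f_1^*)$ and case-split on the position of $[\alpha]$ in $I/\m_S I$. Your route therefore needs the explicit order-maximisation you correctly flag as the main obstacle, while the paper's adapted basis sidesteps it.

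Two small repairs. First, your parenthetical justification that a minimal generator of $I^*$ in degree strictly between $v_1$ and $v_2$ would lift to a minimal generator of $I$ is wrong in general (for $I=(x^2+y^3,xy)$ the element $y^4$ is a minimal generator of $I^*$). The correct argument is immediate from your normalisation: for $\psi\in I$ of order $i<v_2$ one has $\psi-b(0)f_2$ of the same order (as $bf_2$ has order $\ge v_2$), hence $\psi^*\in(f_1)^*=(f_1^*)$, so $I^*_i=(f_1^*)_i$. Second, in Case~2 the degree-$v_2$ part of your representative is $b_2(0)f_2^*$ \emph{plus} a contribution from $b_1f_1$, not a pure multiple of $f_2^*$; the clean way to phrase your obstacle is to show $I\cap\m_S^{v_2+1}\subseteq\m_S I$ directly: if $af_1+bf_2\in\m_S^{v_2+1}$ with $b$ a unit, then solving for $f_2$ and reading off the degree-$v_2$ part forces $f_2^*\in(f_1^*)$, contradicting the normalisation, so $b\in\m_S$ and then automatically $a\in\m_S$.
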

\begin{proof}
Since $J \subseteq I,$ we have $\dim_K I/J=\dim_{K} S/J - \dim_K S/I=1$.  Hence $\m_S (I/J)=0$ where $\m_S=(x,y)S$.  Therefore $\m_S I \subseteq J.$ Since $I$ is a complete intersection ideal,  $\dim_K I/\m_S I =2.$ Therefore 
\[
\dim_K J/\m_S I = \dim_K I/\m_S I - \dim_K I/J=2-1=1.
\]
Choose $g_1 \in J \setminus \m_S I.$ For $g \in J,  $ let $\overline{g}$ denote the image of $g$ in $J/\m_S I$.  Then $ K\overline{g_1} = J/\m_S I.$ Therefore $J=(g_1)S + \m_S I.$ Choose $g_2 \in I \setminus \m_S I$ such that $\overline{g_1}$ and $\overline{g_2} $ are $K$-linearly independent elements of $I/\m_S I$.  Then by Nakayama's lemma $I=(g_1,g_2).$ Also,  
$$J=(g_1) + \m_SI=(g_1)+(x,y)(g_1,g_2)= (g_1) +(x,y)(g_2) .$$ 
Recall that the Hilbert function of $S/J$ is the Hilbert function of $K[x,y]/J^*$.  
{Without loss of generality we may assume that the initial forms of $g_1$ and $g_2$ are $K$-linearly independent.} 
Then $I^*=(g_2^*)+J^*.$   Since the Hilbert functions of $K[x,y]/I^*$ and $K[x,y]/J^*$ are the same except at the position $k$,  $g_2^* $ is a form of degree $k$ in $ I^*  $.  Therefore $g_2$ has order $k$.   \\
Let $h_I:=\HF_{S/I},~h_J=\HF_{S/J}$ and $d:=\max h_J.$ 
If $\max h_I=d-1,$ then $k=d-1$,  and is the peak position of $h_J.$ Suppose $\max h_I=d.$ 
Then there exist elements $g_1$ of order $d$ and $g_2$ of order $t+1$ in $I$ such that $I=(g_1,g_2)$ where $t$ is the peak position of $h_I.$ Hence by above $k=d$ or $k=t+1$.  If $k=d,$ then $h_J(d)=d+1$ and hence $\max h_J=d+1,$ a contradiction.  Therefore $k=t+1.$ Since $t$ is the peak position of $h_I$ and $\Delta(h_I) \leq 1 $ by \eqref{Eqn:codimension2Gsequence},  $h_{I}(t+1)=d-1,$ and so $h_{I}(t+2) \leq d-1$.  This implies that $h_J(k+1) \leq d-1$ while $h_J(k)=h_I(k)+1=d.$ Hence $k$ is the peak position of $h_J$.   
\end{proof}

We know that a  $(1,3,3,4)$ sequence is unimodal by Macaulay's {bound:}  $h$ is of the form 
 $h=(1,3,3,4,5,\ldots,d,d,h_t,\ldots1)$ where $d:=\max h$ and $h_t <d.$ In the following proposition we observe that such a   Gorenstein sequence with $\Delta(h)=2$ has a unique fall by two at $t-1$. 
 
  \begin{proposition}
 \label{Prop:NumCriGorSeq}
Let $h$ be a $(1,3,3,4)$ Gorenstein sequence with $\Delta(h)=2$.  Let  $d:=\max h$ and let $r $ be a nonnegative integer such that $d$ is repeated $r+1$ times in $h.$   Then
$$h_i=\begin{cases}
i+1 & \mbox{ for } i\le d-2\\
 d & \mbox{ for } i =d-1,d, \dots, d+r-1\\
 d-2 & \mbox{ for } i=d+r\\
 {h_{i-1} \mbox { or } h_{i-1} -1 } &  \mbox{ for } i\ge d+r+1. 
\end{cases}$$
 \end{proposition}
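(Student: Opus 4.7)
My plan is to reduce the problem to codimension two via the apparatus developed in this section, and then read off the shape of $h$ from the rigid structure of codimension two Gorenstein Hilbert functions.

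First, by Lemma~\ref{Lemma:reductionOFfghStronger}, after a coordinate change we may assume that $I$ contains elements whose quadratic parts are $xz$, $yz$, $z^2$. Proposition~\ref{Prop:HFOfJ} then ensures that $J := I \cap S$, with $S = K[\![x,y]\!]$, satisfies $\HF_{S/J}(i) = h_i$ for every $i \neq 1$. Proposition~\ref{Prop:classificationOfArtinianGorenstein} provides the key structural decomposition $J^{\perp} = \langle F, G \rangle$ with $x \circ G, y \circ G \in \langle F \rangle$, which yields $\langle F, G \rangle = \langle F \rangle + K \cdot G$ as $K$-vector spaces. Writing $I' := \ann_S(F)$ and $k := \deg G$, a direct comparison of graded pieces shows that $\HF_{S/J}$ and $h^* := \HF_{S/I'}$ agree at every position $i \neq k$, while $\HF_{S/J}(k) = h^*(k) + 1$. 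Since $S/I'$ is codimension two Gorenstein, $h^*$ is symmetric with $\Delta h^* \leq 1$ by~\eqref{Eqn:codimension2Gsequence}, and therefore of the standard form $(1,2,3,\ldots,d^*,d^*,\ldots,d^*,d^*-1,\ldots,1)$.

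Lemma~\ref{lemma:NumCriForGorSeqNew} identifies $k$ as the peak position of $\HF_{S/J}$, so the unique fall by two in $h$ occurs at $k$. Since $h^*$ grows strictly by one until its peak and the two sequences differ only at $k$, the growing part $h_i = i+1$ for $2 \le i \le d-2$ is immediate. The plateau-and-drop profile splits into two cases by comparing $d$ with $d^* = \max h^*$. In Case~A, $d^* = d$: then $h^*(k) = d - 1$, which forces $k$ to equal $t^* + 1$ where $t^*$ is the last $d$-plateau position of $h^*$; the $+1$ at $k$ extends the $d$-plateau of $h$ by one step, so $h_i = d$ for $d - 1 \le i \le d + r - 1$ with $r = t^* - d + 2$, and $h_{d+r} = h^*(d + r) = d - 2$. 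In Case~B, $d^* = d - 1$: then $k$ must coincide with the peak $t^*$ of $h^*$ (otherwise the subsequent fall in $\HF_{S/J}$ would be by one rather than by two), and Macaulay's bound in codimension three pins the plateau length of $h^*$ at $d - 1$ to exactly two. A shorter plateau would force $h$ to grow by two into $k$, while a longer plateau would require $h$ to grow from a plateau at $d - 1$ up to $d$, both violating the codimension three Macaulay bound applied at the position immediately preceding $k$; this case therefore yields $r = 0$, $h_{d-1} = d$, $h_d = d-2$.

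Finally, for $i \ge d + r + 1$ the values $h_i$ coincide with $h^*(i)$, which follows the strictly decreasing tail of $h^*$, so in particular $h_i = h_{i-1} - 1 \in \{h_{i-1}, h_{i-1} - 1\}$, confirming the last clause of the formula. The main technical obstacle is the plateau-length analysis in Case~B, where careful use of Macaulay's bound in codimension three at the position immediately preceding $k$ is needed to eliminate both the degenerately short and the degenerately long plateau configurations.
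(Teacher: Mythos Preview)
Your approach matches the paper's: reduce to codimension two via Proposition~\ref{Prop:classificationOfArtinianGorenstein}, invoke Lemma~\ref{lemma:NumCriForGorSeqNew} to place the unique position $k$ at the peak of $\HF_{S/J}$, then read off the shape of $h$. The paper stops after identifying the peak and dismisses the rest as ``easy to check,'' while you spell out the Case~A/B analysis explicitly.

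Two misstatements need correcting, though neither is fatal. First, $h^* = \HF_{S/\ann_S(F)}$ is \emph{not} symmetric in general: local codimension-two Gorenstein Hilbert functions are only guaranteed $\Delta h^* \leq 1$ by~\eqref{Eqn:codimension2Gsequence}, so the tail can plateau (e.g.\ $(1,2,3,3,2,2,1,1)$). Consequently your ``strictly decreasing tail'' and the equality $h_i = h_{i-1}-1$ are false as stated, and in Case~A the claim $h^*(d+r)=d-2$ does not follow from the shape of $h^*$ alone---it follows instead because the assumed fall by two at the peak forces $h^*(k+1)=h^*(k)-1$. The weaker conclusion $h_i\in\{h_{i-1},h_{i-1}-1\}$ that the proposition actually asserts does survive. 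Second, setting $k:=\deg G$ is unjustified: the $G$ supplied by Proposition~\ref{Prop:classificationOfArtinianGorenstein} is $T_1$, with no degree control, and its leading form may already lie in the corresponding filtered piece of $\langle F\rangle$. What you actually use---and what the proposition does provide---is merely that $\HF_{S/J}$ and $h^*$ differ at a single position; define $k$ to be that position and drop the reference to $\deg G$.
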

\begin{proof}
    Let $h$ be a $(1,3,3,4)$ Gorenstein sequence with $\Delta(h)=2$.  By Proposition \ref{Prop:classificationOfArtinianGorenstein}  $h$ has a unique fall by two,  say at position $m.$  By Proposition \ref{Prop:classificationOfArtinianGorenstein} there exist $F,G \in K_{DP}[X,Y]$ such that $\HF_{S/\ann_S(F)}$ and $\HF_{S/\ann_S(F,G)}$ differ,  possibly,  at one position, say $k$, and moreover $\HF_{S/\ann_S(F,G)}$ and $h$ differ only at position $1$. Let $J =\ann_S(F,G)$.
    Hence \[
        \HF_{S/J}=(1,2,3,4,\ldots,h_{m-1},h_m,h_{m+1}=h_m-2,\ldots,1).
\]
By~\eqref{Eqn:codimension2Gsequence} we have $\Delta({\HF_{S/\ann_S(F)}}) = 1$, thus $\HF_{S/\ann_S(F)}$ and $\HF_{S/J}$ differ at position $m$. Since they differ in at most one position, we have
%Since $\Delta(h_I)=1$ by  ,  and $h_{\ann_S(F)} $ and $h_J$ differ possibly at one position,  we have 
\[
    \HF_{S/\ann_S(F)}=(1,2,3,4,\ldots,h_{m-1},h_m-1,h_{m+1}=h_m-2,\ldots,1).
\]
Therefore $k=m.$ By Lemma \ref{lemma:NumCriForGorSeqNew} the position $k
$ is the peak position of $\HF_{S/J}$,  and so $m=k$ is the peak position of $h.$ Now it is easy to check that $h$ has the required form.
 \end{proof}

\begin{proof}
    [Proof of $(ii) \implies (iii)$ of Theorem \ref{Equi:GorAndCIIntro}:] Let $h$ be a $(1,3,3)$ Gorenstein sequence.  Then $h_3 \leq 4$  because $h$ is an $O$-sequence.  If $h_3\leq 3,$ {then the result is clear}.  Let  $h_3=4.$ Then $\Delta(h) \leq 2$ by Proposition \ref{Prop:classificationOfArtinianGorenstein}.  Hence  $\Delta(h)=1 $ or, by Proposition \ref{Prop:NumCriGorSeq}, $h$ is a $(1,3,3,4)$ sequence with a  unique fall by two at the peak position.  
\end{proof}

We now give an example of a non-Gorenstein sequence using  {the above proven
implication in} Theorem~\ref{Equi:GorAndCIIntro}.  This example is a particular case of a more general result  \cite[Corollary 3.12]{IM21}.

\begin{example}
\label{Example:IM}
Consider the O-sequence $h=(1,3,3,4,5,4,4,2,1)$.  As a consequence of \cite[Corollary 3.12]{IM21} it follows that $h$ is not a Gorenstein sequence.  
We show that $h$ is not a Gorenstein sequence using Theorem \ref{Equi:GorAndCIIntro}.    
Here $d=\max h = 5$ and hence the only possibility for a  {fall by} $2$ is the
position $4$. The fall occurs elsewhere, and hence this  sequence is not a Gorenstein sequence.

Similarly the sequence  $(1,3,3,4,3,3,3,1)$ is not  Gorenstein because here a {fall} by $2$ can occur only at the position $3$ by  Theorem \ref{Equi:GorAndCIIntro}.  See \cite[p.100]{Iar94} for an alternative argument.  
\end{example}

\section{Construction of $(1,3,3)$ complete intersection
sequences}\label{sect4}
In this section   we will construct an  ideal which is a complete intersection with the Hilbert function $h$ in each of the following cases:
 \begin{enumerate}
     \item  $(1,3,3,\leq 3)$ sequence,  {that is a $(1,3,3)$ sequence with $h_3 \leq 3$, }
 \item $(1,3,3,4)$ sequence.  
 %\item $(1,3,3,4)$ sequence with unique fall by two at the peak position.  
 \end{enumerate} 
 
 This proves $(iii) \implies (i)$ of Theorem \ref{Equi:GorAndCIIntro}.  In the case $h_3 \leq 3,$ we give a  polynomial $F \in K_{DP}[X,Y,Z]$ such that $\ann_R(F)$ is a CI ideal with the  {Hilbert function} $h$.  If $h_3=4,$ by reducing to the codimension two case,  we give an explicit algorithm to construct a  CI ideal with the  {Hilbert function} $h$.

 \subsection{Construction of a complete intersection ideal with the {Hilbert function} $(1,3,3,\leq 3)$} In the following theorem we prove that any $(1,3,3)$ sequence $h$ with $h_3 \leq 3$ is admissible for a complete 
intersection. In fact, we give an explicit $F \in \D$ such that $A_F$ is a complete intersection with the Hilbert function $h.$ 
Notice that $h$ is a $(1,3,3,\leq 3)$ sequence implies that $h$ is of the form $\left(1, 3, 3, {3,  \ldots , 3}, {2, \ldots
        ,2}, {1 \ldots 1}, 1\right)$  by Macaulay's condition.

\begin{theorem}
 \label{thm:h3Atmost3}
 Let 
        \[ h := \left(1, 3, 3, \underbrace{3,  \ldots , 3}_u, \underbrace{2, \ldots
        ,2}_v, \underbrace{1, \ldots, 1}_w, 1\right)\] be an O-sequence. Then
        there exists a complete intersection $A=R/(f,g,p)$   with the Hilbert function $h.$ More precisely, 
         \begin{enumerate}
            \item (trivial case) Suppose that $u = v = w = 0.$ Then $\ann_R(XYZ)=(x^2,y^2,z^2)$ is a complete intersection and $R/\ann_R{ (XYZ) }$ has the Hilbert function $h = (1, 3, 3, 1)$.
            \item Suppose that at least one of the $u, v, w$ is nonzero. Then 
            \[\ann_R(F)=\left(yz - x^{u + v + w + 2},\,xz - y^{u + v + 2},\, xy- z^{u+2}
                \right)\] where $ F := X^{u + v + w + 3} + Y^{u + v + 3} + Z^{u + 3} + XYZ
                $              
                is a complete intersection and $A_F$ has the Hilbert function $h.$ 
        \end{enumerate}
\end{theorem}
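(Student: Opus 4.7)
My plan begins with the trivial case $u=v=w=0$: by the contraction formula $x^2,y^2,z^2\in\ann_R(XYZ)$, and both $R/(x^2,y^2,z^2)$ and $R/\ann_R(XYZ)$ have $K$-dimension $8$ (with monomial bases $\{x^iy^jz^k\mid i,j,k\in\{0,1\}\}$ and $\{1,X,Y,Z,XY,XZ,YZ,XYZ\}$ respectively), forcing equality and Hilbert function $(1,3,3,1)$.

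For case~(2), set $s=u+v+w+3$, $t=u+v+3$, $r=u+3$ and write $a=s-1$, $b=t-1$, $c=r-1$, so $a\geq b\geq c\geq 2$ with $a\geq 3$; let $I=(yz-x^a,xz-y^b,xy-z^c)$. The inclusion $I\subseteq\ann_R(F)$ is a direct check: $yz\circ F=X=x^a\circ F$ (the former from the $XYZ$ term, the latter from $X^s$), and analogously for the other two generators. The heart of the argument is establishing that $R/I$ has finite length. The defining relations give $\bar x^{a+1}=\bar x\bar y\bar z=\bar y^{b+1}=\bar z^{c+1}$ in $R/I$; multiplying the identity $\bar x^{a+1}=\bar z^{c+1}$ by $\bar x$ and iterating $\bar y\bar z=\bar x^a$ exactly $c$ times (permissible because $b\geq c$) yields
\[
\bar x^{a+2}=\bar z^c\bar y^b=\bar x^{ac}\bar y^{b-c},\qquad\text{i.e.}\qquad \bar x^{a+2}\bigl(1-\bar x^{ac-a-2}\bar y^{b-c}\bigr)=0.
\]
Since $ac-a-2\geq a-2\geq 1$ and $b-c\geq 0$, the element $\bar x^{ac-a-2}\bar y^{b-c}$ lies in the maximal ideal of the local ring $R/I$, so the parenthesised factor is a unit and $\bar x^{s+1}=\bar x^{a+2}=0$. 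Completely analogous manipulations---pairing $\bar y$ with $\bar z^{c+1}$ and iterating $\bar x\bar z=\bar y^b$, and pairing $\bar z$ with $\bar x^{a+1}$ and iterating $\bar x\bar y=\bar z^c$---yield $\bar y^{t+1}=\bar z^{r+1}=0$, so $R/I$ is Artinian; hence $\height I=3$ and $I$ is automatically a complete intersection.

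Thus $R/I$ is Gorenstein with one-dimensional socle, which I would show equals $K\cdot\bar x^s$: it is killed by $\m$ since $\bar x\cdot\bar x^s=\bar x^{s+1}=0$, $\bar y\bar x^s=\bar y\bar y^{b+1}=\bar y^{t+1}=0$, and $\bar z\bar x^s=\bar z\bar z^{c+1}=\bar z^{r+1}=0$, while it is nonzero because $x^s\circ F=1$ forces $x^s\notin\ann_R(F)\supseteq I$. Consequently, if $\ann_R(F)/I$ were nonzero it would intersect the socle of $R/I$, forcing $\bar x^s\in\ann_R(F)/I$ and contradicting $x^s\circ F\neq 0$; hence $I=\ann_R(F)$. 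Finally, to read off the Hilbert function I would exhibit the basis of $\langle F\rangle$ consisting of the pure powers $X^i$ for $0\leq i\leq s-2$, $Y^j$ for $1\leq j\leq t-2$ and $Z^k$ for $1\leq k\leq r-2$, together with $x\circ F=X^{s-1}+YZ$, $y\circ F=Y^{t-1}+XZ$, $z\circ F=Z^{r-1}+XY$ and $F$ itself; these are linearly independent because the last four involve the monomials $XY,XZ,YZ,XYZ$ absent from the pure-power span, and distributing them by degree recovers $h_i$ term by term. The main technical obstacle is the cyclic manipulation $\bar x^{a+2}=\bar x^{ac}\bar y^{b-c}$ which, together with its two symmetric variants, drives Artinianity uniformly across all subcases of $v,w\in\{0,\geq 1\}$.
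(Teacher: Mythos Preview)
Your argument is correct and shares the paper's central computation---the unit trick showing $\bar x^{a+2}(1-\bar x^{ac-a-2}\bar y^{\,b-c})=0$---but organises the endgame differently. The paper, having shown only $x^{a+2}\in I$, passes to the initial ideal $I^*$ and argues that every monomial reduces modulo $I^*$ to a pure power $x^\alpha$, $y^\beta$ or $z^\gamma$ within the expected ranges, giving $\HF_{R/I}(i)\le h_i$; since $I\subseteq\ann_R(F)$ and the paper has already identified $\HF_{A_F}=h$ by the slick observation that $F$ and $F_0:=X^s+Y^t+Z^r$ have the same Hilbert function (their derivatives by any operator of order~$\ge 2$ have the same leading form), the dimension count forces $I=\ann_R(F)$. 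Your route instead runs the unit trick three times to kill all three pure powers, then uses the Gorenstein socle argument to obtain $I=\ann_R(F)$, and finally enumerates a basis of $\langle F\rangle$. Both are valid; the paper's comparison with $F_0$ is more economical, while your socle argument is pleasantly structural.

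One point deserves care: in your final step, ``distributing them by degree recovers $h_i$'' is not automatic from a basis count alone---one can have a basis with $|B_i|$ elements of degree $i$ whose associated graded dimensions differ from $|B_i|$. What you need (and what is easily checked here) is that the \emph{leading forms} of your basis elements are linearly independent: the pure powers have monomial leading forms, and the four special elements have leading forms supported on $X^{s-1}$, $Y^{t-1}$, $Z^{r-1}$, $X^s$ (or, in the boundary cases $t=3$ or $r=3$, on $Y^2+XZ$ or $Z^2+XY$), all outside the pure-power span in their respective degrees. With that remark the inequality $\dim(I^\perp)_i\ge|B_i|$ holds degreewise, and summing to the known total forces equality.
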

\begin{proof}
 The case $u = v = w = 0$ is trivial. Hence we assume that at least one
        of the $u$, $v$, $w$ is nonzero.  Let $A := u + v + w + 2$, $B := u + v + 2$ and $C := u + 2$. Then
        $A \geq B \geq C\geq 2$ and $A > 2$.
        Note that the polynomial $F = X^{A + 1} + Y^{B + 1} + Z^{C + 1} + XYZ$ has the same
        Hilbert function as $F_0 = X^{A+1} + Y^{B+1} + Z^{C+1}$. Indeed, for
       any operator $\partial$  {of order at least two} the
        polynomial $\partial\circ F -\partial\circ F_0$ 
        is at most linear, so the leading forms of $\partial\circ F$ and $\partial\circ F_0$ 
        agree. {We use~\eqref{H2} to conclude that the Hilbert
        functions of $A_{F}$ and $A_{F_0}$ agree}.  {We check directly
        that} $A_{F_0}$ has the Hilbert function $h$, $A_{F}$ also has the Hilbert function $h.$ 
       {(The algebra $A_{F_0}$ is not a complete intersection; we use it
    only to compute the Hilbert function.)}

        It is easy to verify that
        \begin{equation*}\label{eq:elsone}
           I:= (x^A - yz,\ y^{B} - xz,\ z^{C} - xy) \subseteq \ann_R(F).
            \end{equation*}
            {We claim that $I=\ann_R(F)${, so $I$ is a complete
            intersection}. We prove this below.}
%        We know that $H_{S/\ann{f}} = H$ and that $I \subseteq \ann{f}$. 
    First we show that $\HF_{R/I}(i) \leq h_i$ for all $i=0,\ldots,A+1.$ 
        Note that
        \[
            x^{A+2} \equiv x^2 yz =
            (xz)(xy) \equiv y^{B}z^C =
            y^{B-C}(yz)^C \equiv y^{B-C}x^{AC} \mod I,
        \]
        so that $x^{A+2}\left(1 -  y^{B-C}x^{AC - A - 2} \right) \in I$. We have $AC - A - 2\geq 2A - A - 2 = A-2 > 0$ and $B - C\geq 0$. 
%        Equalities imply that $A = B = C = 2$, which contradicts to the fact that $A>2.$ 
        Thus $y^{B-C}x^{AC - A - 2} \in (x,y,z).$ Therefore $1 -  y^{B-C}x^{AC - A - 2}$ is
        invertible and hence
        \begin{equation}\label{eq:elstwo}
            x^{A+2} \in I.
        \end{equation}
        Moreover,
        \begin{equation}\label{eq:elsthree}
            xyz \equiv x^{A+1} \equiv y^{B+1} \equiv
            z^{C+1} \mod I.
        \end{equation}
        Let $I^*$ be the ideal generated by the initial forms of elements
        of $I$. Then $\HF_{R/I} = \HF_{P/I^*}$ where $P=K[x,y,z]$.
        The equations \eqref{eq:elstwo},
        \eqref{eq:elsthree} imply that each monomial may be reduced modulo
        $I^*$ to a monomial of one of the forms $x^a$, $y^b$ or $z^c$, for $a \leq
        A+1$, $b \leq B$ and $c \leq C$. Therefore $\HF_{P/I^*}(i) \leq h_i$ for all $i=0,\ldots,A+1$ {and so}
       $\dim_K(R/I) \leq \sum_{i \geq 0} h_i= \dim_K A_F.$ Since $I \subseteq \ann_R(F),$ this   
        proves the claim and hence the result.
\end{proof}

\begin{remark}
    The reason to distinguish $u = v = w = 0$ case is that the ideal $(yz -
    x^{2},\ xz - y^2,\ xy - z^2)$  is not a complete intersection.
   {This case is special in that there is no pure power of higher
    degree, so the division procedure hidden in the proof of
    Theorem~\ref{thm:h3Atmost3} does not stop.}
    \end{remark}
    
    We remark that there are Gorenstein $K$-algebras with the Hilbert function a $(1,3,3,\leq 3)$ sequence   {which are not} complete intersections.
\begin{example}
Let $F=X^4+Y^3+Z^3$ and $I=\ann_R(F)=(xy,xz, yz, x^4-z^3,y^3-z^3).$ Then $R/I$ is Gorenstein with the Hilbert function $(1,3,3,1,1)$,  but $I$ is not a complete intersection.
\end{example}

\subsection{Construction of a  complete intersection ideal with the  {Hilbert function} of $(1,3,3,4)$ type} 
First we prove the following proposition to construct a complete intersection ideal starting from $F,G \in  K_{DP}[X,Y]$ satisfying certain conditions,   compare Proposition~\ref{Prop:classificationOfArtinianGorenstein}.

\begin{proposition}
 \label{Prop:PropertiesOfJ}
 Let $F, G\in K_{DP}[X,Y] $ be  minimal  generators of the $S$-submodule
 $\langle F,G \rangle$ and  further assume that
 \begin{equation}\label{eq:cond}
 x \circ G,~y\circ G \in \m_S^2 \circ F
 \end{equation}
where $\m_S=(x,y)S$ is the maximal ideal of $S=K[\![x,y]\!].$ 
Let $J:=\ann_S(F,G).$ Then there
exists a complete intersection ideal $I$ in $R$ such that $I \cap S=J.$
 %with the Hilbert functions $h$ such that $(I \cap S)^{\perp} =
 Moreover, if $S/J$ has the Hilbert function $h',$ then $R/I$ has the Hilbert function $h$ where
 \[
      h_i=\begin{cases}
        h_i' & \mbox{ if } i \neq 1\\
         3 & \mbox{ if } i=1.
     \end{cases}
 \]
\end{proposition}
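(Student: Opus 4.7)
The plan is to construct $I$ as a three-generator ideal $(f,g,p)$ with leading forms $xz$, $yz$, $z^2$, so that Proposition~\ref{Prop:HFOfJ} computes its Hilbert function. By hypothesis~\eqref{eq:cond}, I pick $U,V \in \m_S^2$ with $x\circ G = U\circ F$ and $y\circ G = V\circ F$, and set $f := xz - U$, $g := yz - V$; since $U,V$ have order at least two, the leading forms of $f,g$ are indeed $xz$ and $yz$. Following the structure exposed in Proposition~\ref{Prop:classificationOfArtinianGorenstein}, I then search for $W\in S$ of order at least three so that $p := z^2 - W$ completes $(f,g,p)$ to a complete intersection with $I\cap S = J$. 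The choice of $W$ is guided by constructing a polynomial $T = F + ZG + Z^2 T_2 + Z^3 T_3 + \cdots \in K_{DP}[X,Y,Z]$ via the recursion $T_{k+2}=W\circ T_k$, while imposing the compatibility conditions $x\circ T_{k+1}=U\circ T_k$ and $y\circ T_{k+1}=V\circ T_k$ for all $k\geq 0$. Since $\dim_K \langle F,G\rangle<\infty$, the sequence $T_k$ eventually terminates, so $T$ is a polynomial and $I=(f,g,p)\subseteq \ann_R(T)$ by construction.

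Next, I would verify $I\cap S = J$. For $\phi \in J$, I would show $\phi$ annihilates every $T_k$ by induction (starting from $T_0 = F$ and $T_1 = G$ and using $T_{k+2}=W\circ T_k$), and then recover an explicit expression of $\phi$ as an $R$-combination of $f, g, p$, giving $\phi \in I$. Conversely, for $\phi\in I\cap S$, the relation $\phi\circ T = 0$ has $Z^0$-coefficient $\phi\circ F$ and $Z^1$-coefficient $\phi\circ G$, both forced to vanish, so $\phi\in J$. For the complete intersection property, I note that $R$ is three-dimensional regular local and $I$ is three-generated, so it suffices to show $R/I$ is Artinian: using the relations $xz\equiv U$, $yz\equiv V$, $z^2\equiv W$ modulo $I$, every monomial of $R$ reduces to an element of $S + Sz$, so $R/I$ is generated as a module over the Artinian ring $S/J$ by $\{1, z\}$, hence is itself Artinian. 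Finally, since the generators of $I$ have leading forms $xz, yz, z^2$, Proposition~\ref{Prop:HFOfJ} applies and gives $\HF_{R/I}(i) = \HF_{S/J}(i)$ for $i\neq 1$ and $\HF_{R/I}(1) = 3$.

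The main obstacle is the construction of $W$ in the first step: one must show that a suitable $W \in S$ exists making the recursion $T_{k+2}=W\circ T_k$ terminate in finite degree while remaining simultaneously compatible with the $x$- and $y$-relations coming from $U$ and $V$. This is precisely where the strength of hypothesis~\eqref{eq:cond}---that $U$ and $V$ lie in $\m_S^2$ rather than just in $\m_S$---becomes critical: the compatibility constraints translate into second-order conditions on $F$ and $G$, giving enough flexibility to solve for $W$ degree by degree and produce a self-consistent terminating $T$. A subtler point is to check that the constructed ideal $I=(f,g,p)$ equals $\ann_R(T)$ (not merely contains it), which can be done by a dimension count comparing $\dim_K R/I$ computed from the reduction argument above with $\dim_K \langle T\rangle$.
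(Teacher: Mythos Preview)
Your approach via the dual polynomial $T$ is natural in light of Proposition~\ref{Prop:classificationOfArtinianGorenstein}, but there are two genuine gaps that make it circular.

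First, the existence of $W$: already at level $k=1$ the recursion forces $(xW)\circ F = U\circ G$ and $(yW)\circ F = V\circ G$, an overdetermined system for the image of $W$ in $S/\ann_S(F)$. Whether it is solvable depends on the particular lift $U,V$ of~\eqref{eq:cond} you chose, and ``solve degree by degree'' does not address this. Second, and more seriously, your argument for $J\subseteq I$ does not go through. Showing that $\phi\in J$ annihilates $T$ yields only $\phi\in\ann_R(T)$, not $\phi\in I$. You propose to close this by proving $I=\ann_R(T)$ via a dimension count, but your upper bound on $\dim_K R/I$ (``$R/I$ is generated over the Artinian ring $S/J$ by $\{1,z\}$'') already presupposes $J\subseteq I$: without that, $R/I$ is only an $S/(I\cap S)$-module, and the inclusion you \emph{do} have, $I\cap S\subseteq J$, goes the wrong way for an upper bound. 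So Artinianness of $R/I$, the dimension count, and $J\subseteq I$ all lean on each other.

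The paper avoids both issues by an entirely different route that never constructs $T$. It exploits the Hilbert--Burch structure of the codimension-two ideal $J$: the canonical module $\omega_{S/J}=\langle F,G\rangle$ has a minimal presentation $S^3\xrightarrow{N}S^2$, and hypothesis~\eqref{eq:cond} (after adjusting $G$ inside $\langle F,G\rangle$) forces two columns of $N$ to be $(d_{12},-x)^T$ and $(-xa_2',y)^T$ with $d_{12}\in\m_S^2$, $a_2'\in\m_S$. One then \emph{defines} $U,V,W$ directly from the entries of $N$; no search is needed. By Hilbert--Burch, $J$ is generated by the three $2\times2$ minors of $N^T$, and a short explicit computation exhibits each minor as an $R$-linear combination of $f=xz-U$, $g=yz-V$, $p=z^2-W$. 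This gives $J\subseteq I$ immediately, whence $(x,y)^t\subseteq I$ for $t\gg0$ and $R/I$ is Artinian, so $I$ is a complete intersection. The remaining inclusion $I\cap S\subseteq J$ is then handled by a direct leading-term argument using Grauert's Division Theorem, and Proposition~\ref{Prop:HFOfJ} finishes the Hilbert function computation.
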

\begin{proof}
Let $J=\ann_S(F,G)$. 
 Then $J = \ann_S(F)\cap \ann_S(G)$. Let
     $\omega_{S/J}$ be the canonical module of $S/J$. Then $\omega_{S/J} \cong J^\perp$ is the
     $S$-submodule of $K_{DP}[X, Y]$ generated by $F$ and $G$.  First, we do a
     change of coordinates. Write $y \circ G = a_2
     \circ F$ for $a_2 \in \m_S^2$. Let $a_2 = xa_2' + ya_2''$ {for some $a_2',~a_2''\in \m_S$} and
     $G' = G - a_2'' \circ F$. Then the inverse
     systems $\langle F,G\rangle$ and $\langle F,G'\rangle$ are equal. Moreover, 
     $$y \circ G' = y\circ G -(ya_2'') \circ F  = a_2 \circ F - (ya_2'') \circ F = (a_2 - ya_2') \circ F= (xa_2')\circ F.$$ 
     Hence  by replacing $G$ by $G'$ we may assume that $a_2$ is divisible by
     $x$ and so $a_2 = xa_2'$ for $a_2'\in \m_S$.

     The relations $x\circ G, y \circ G\in
     \m_S^2\circ F$ give rise to syzygies of $\omega_{S/J}$. As $G\not\in \langle
     F\rangle$, these syzygies are minimal, so the minimal resolution of
     $\omega_{S/J}$ is
     \[
         0\to S\to S^3 \overset{N}{\to} S^2 \to 0,
     \]
     where $N$ has, up to coordinate change, the form
     \[
         N = \begin{pmatrix}
             d_{11} &  d_{12}& -xa_2' \\
             d_{21} & -x & y \\
         \end{pmatrix}
     \]
     with $d_{12}\in \m_S^2$.  {As noted above, $G\not\in\langle
         F\rangle$. By~\eqref{eq:cond} also $F\not\in \langle G\rangle$. Hence
         $d_{11}, d_{21}\in \m_S$. By further column operations one could
         assume $d_{21} = 0$, but this will not be important for us.}
     Let $d_{12} = xd_{12}' + yd_{12}''$ where $d_{12}', d_{12}''\in \m_S$.
     Let $U_1 = -\frac{1}{2}(d_{12}' + d_{21})$, $U_2 =
     -d_{12}''$, $V_1 = -a_2'$, $V_2 =
     \frac{1}{2}(d_{12}' - d_{21})$. By construction $U_1, U_2, V_1, V_2\in
     \m_S$. Let further
     \[
         U=xU_1+yU_2,\qquad V = xV_1 + yV_2,\qquad W = U_2V_1 + V_{2}^2 - d_{11}.
     \]
     By construction we have $U, V\in \m_S^2$ and $W\in \m_S$.
     By duality,  the {minimal} resolution of $S/J$ is
    \[
        F_{\bullet}: 0 \to S^2 \overset{N^T}{\to} S^3 \to S \to 0
    \]
    where  \[
         N^T=\left( \begin{array}{cc}
            d_{11}=U_2V_1+V_2^2-W &d_{21}= -U_1-V_2\\
            d_{12}=-U+xV_2& -x \\
           -xa_2'=xV_1 & y
         \end{array} \right).
     \]
    Therefore $J$ is the ideal generated by the $2 \times 2$  minors of $N^T.$ Thus 
    $$J=(-yU+x V, -UV_1-VV_2+yW, -UU_1-VU_2+xW ).$$ 
   Let us {define} an ideal $I\subset K[\![x, y, z]\!]$ by $I = (f:=xz - U, g:=yz
    - V, p:=z^2 -W)$.
We have
\begin{eqnarray}
    -yU+x V &=& yf-xg \label{fstgen}\\
     -UV_1-VV_2+yW&=&fV_1+(z+V_2)g-yp\label{thirdgen}\\
 -UU_1-VU_2+xW&=& (z+U_1)f+gU_2-xp. \label{sndgen}
 \end{eqnarray}
Hence $J \subseteq I \cap K[\![x, y]\!]$. In particular $(x, y)^{t} \subset I$ for $t \gg 0$. Since also $z^2
-W\in I$ and $W\in \m_S,$ we have $z^{2t} \in I$.  Hence we conclude that $R/I $ is Artinian.  Thus $I$ is a complete intersection.\\
{\bf Claim:} $J=I \cap K[\![x, y]\!]$.\\
    {\bf Proof of claim:} Clearly,   $J \subseteq I \cap K[\![x, y]\!]$.  Suppose that $j = \alpha f + \beta g + \gamma p\in K[\![x,
y]\!]$ for some $\alpha,\beta,\gamma \in R$. We want to prove that $j\in J$.  
 By adding suitable multiples of~\eqref{sndgen} and~\eqref{thirdgen}
to $j$, we may assume $\gamma\in K[\![z]\!]$. But then $0 \equiv j \equiv z^2\cdot
\gamma\mod (x, y)$, hence $\gamma = 0$. Thus $j = \alpha f + \beta g$.
After adding a multiple of~\eqref{fstgen} to $j$, we may assume that $\beta\in
K[\![y, z]\!]$.
Assume $j\neq 0$.
Fix a term ordering $\tau$ on the set of monomials of $P=K[x,y,z]$ such that 
$x<y<z.$ Then with the notation fixed in Section \ref{Subsec:LT} it follows
that $ \LT_{{\ov{\tau}}}(f) = xz$ and $\LT_{{\ov{\tau}}}(g)= yz.$ Hence $\LT_{{\ov{\tau}}}(\alpha f)$ is
divisible by $x$ while $\LT_{{\ov{\tau}}}(\beta g) \in K[y, z]$.  Thus $\LT_{{\ov{\tau}}}(\alpha f)+\LT_{{\ov{\tau}}}(\beta g)  \neq 0.$ Therefore $\LT_{{\ov{\tau}}}(j)  = \LT_{{\ov{\tau}}}(\beta g) $ or  
$\LT_{{\ov{\tau}}}(j)  = \LT_{{\ov{\tau}}}({\alpha f})$.  Now by assumption $j \in K[\![x,
y]\!], $ hence
$\LT_{{\ov{\tau}}}(j) \in K[x, y]$ while $\LT_{{\ov{\tau}}}({\alpha f}), \LT_{{\ov{\tau}}}({\beta g})   \in zK[x,y,z]$.
Thus $\LT_{{\ov{\tau}}}(j)  = 0$, so $j = 0$ is indeed an element of $J$.

    Therefore,  by Proposition \ref{Prop:HFOfJ},  $R/I$ has the Hilbert function $h.$  \qedhere
\end{proof}

 {In the following Lemma~\ref{ref:existenceOfG} we will need the following observation on the symmetric
decomposition. Since it is disconnected from the remaining part of the proof of the Lemma and unimportant for us
except for the proof, we leave it as a remark.}
\begin{remark}\label{ref:square}
    Suppose $h$ is a Hilbert function of a Gorenstein algebra $B$ with embedding dimension $h_1 = 2$.
    Then the symmetric decomposition of $h$ is
    unique~\cite[Theorem~2.2]{Iar94}.
    Let  $t = \max(i\ |\ h_t  \geq 2)$. 
    It can be directly verified that $Q(a)_1 \neq 0$ if and only if $a = 0$ or
    $a = s - t - 1$, where $s$ is the socle degree of $B$.  
    This implies that $Q(a)_1 = 0$ for all $a \geq s - t$, so $C(a)_1 = 0$ for all $a\geq s
    -t$. It follows from the definition
    of $C(a)_1$ that $(0 : \m_B^{\delta})\cap \m_B = (0 : \m_B^{\delta}) \cap \m_B^2$ for all
    $\delta \leq t$, where $\m_B$ is the maximal ideal of $B$.
\end{remark}

 \newcommand{\spann}[1]{\langle #1 \rangle}
 \begin{lemma}\label{ref:existenceOfG}
     Let $F\in K_{DP}[X,Y]$ be such that $A_F$ has  {the} Hilbert function
     \[
         h = (1, 2, 3, 4 \ldots,d-1 , d, d,  \ldots,  d, d, h_k=d-1, h_{k+1},
         h_{k+2}, \ldots,1 ),
     \]
     where $d=\max h$.  Assume $d\geq 3$.    Then
     there exists $G\in K_{DP}[X,Y]$ such that $S/\ann_S(F, G)$ has  {the} Hilbert
     function
     \[
         h' = (1, 2, 3,  \ldots , d-1,d, d,  \ldots , d, d, d, h_k' =d, h_{k+1},
         h_{k+2}, \ldots,1 ).
     \]
     Moreover, $F$ and $G$ satisfy the conditions of Proposition~\ref{Prop:PropertiesOfJ}.
 \end{lemma}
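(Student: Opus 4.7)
My plan is to realize $G$ as a degree-$k$ representative of a nonzero socle class of the $S$-module $K_{DP}[X,Y]/\langle F\rangle$, and verify that this choice automatically delivers both the Hilbert function $h'$ and the compatibility conditions of Proposition~\ref{Prop:PropertiesOfJ}.

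First I would observe that if $G$ has degree exactly $k$, lies outside $\langle F\rangle$, and satisfies $x\circ G,\,y\circ G\in \mathfrak{m}_S^2\circ F$, then iterating the second condition yields $\mathfrak{m}_S^j\circ G\subseteq \mathfrak{m}_S^{j+1}\circ F\subseteq \langle F\rangle$ for all $j\geq 1$. Consequently $\langle F,G\rangle_i=\langle F\rangle_i$ for every $i<k$; the equality for $i>k$ is automatic as $\deg G=k$; and $\langle F,G\rangle_k=\langle F\rangle_k+KG$ adds exactly one dimension. This gives $\HF_{S/\ann_S(F,G)}=h'$ and the hypotheses of Proposition~\ref{Prop:PropertiesOfJ}.

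For existence I would invoke Macaulay duality. The map $\delta:\langle F\rangle_k\to \langle F\rangle_{k-1}^{\oplus 2}$, $A\mapsto(x\circ A,y\circ A)$, is injective (dual to the surjection $\mathfrak{m}_{A_F}\otimes A_{F,k-1}\twoheadrightarrow A_{F,k}$), and every $G\in K_{DP}[X,Y]_k$ corresponds to a unique compatible pair $(x\circ G,y\circ G)$ satisfying $y\circ(x\circ G)=x\circ(y\circ G)$. Since $\dim\langle F\rangle_k=d-1<d=\dim\langle F\rangle_{k-1}$, a dimension count shows that the space of compatible pairs inside $\langle F\rangle_{k-1}^{\oplus 2}$ strictly contains the image of $\delta$, and any lift $G$ of an extra compatible pair satisfies $x\circ G,\,y\circ G\in \langle F\rangle_{k-1}$ with $G\notin \langle F\rangle_k$. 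Because $k\leq s-1$, the subspace $\mathfrak{m}_S^{s-k+1}\circ F\subseteq \mathfrak{m}_S^2\circ F$ already accounts for all of $\langle F\rangle_{k-1}$ at top degree, so the leading-form version of the required condition is immediate.

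The delicate point, which I expect to be the main obstacle, is upgrading from the leading-form statement to the honest containment $x\circ G,\,y\circ G\in \mathfrak{m}_S^2\circ F$ when $F$ is not homogeneous. The plan is to iteratively correct $G$ by subtracting elements of $\langle F\rangle$ so that the lower-order parts of $x\circ G,\,y\circ G$ also land in $\mathfrak{m}_S^2\circ F$; the observation in Remark~\ref{ref:square}, giving $(0:\mathfrak{m}_{A_F}^\delta)\cap \mathfrak{m}_{A_F}=(0:\mathfrak{m}_{A_F}^\delta)\cap \mathfrak{m}_{A_F}^2$ for $\delta\leq t$, guarantees that no obstruction to this correction arises in the Loewy filtration of $A_F$.
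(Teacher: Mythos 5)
Your plan differs from the paper's in the construction of $G$, and that is where the gap lies. The paper writes $\ann_S(F) = (f,g)$ with $\order(f)=d$ and $\order(g)=k$, forms the ideal $I=(f,xg,yg)$, and takes $G$ to span the one-dimensional quotient $I^\perp/\langle F\rangle$; this directly produces a genuine polynomial with $x\circ G,\ y\circ G\in \langle F\rangle$ and $\ann_S(F,G)=I$. Your dimension count instead lives in the graded pieces $\langle F\rangle_k$, $\langle F\rangle_{k-1}$, which, $F$ being inhomogeneous, are leading-form spaces (subquotients of $K_{DP}[X,Y]$), not subspaces. What the count produces is a compatible pair of leading forms; lifting this to an honest $G$ with $x\circ G,\ y\circ G$ literally in $\langle F\rangle$ is a separate, nontrivial task: if $\tilde A,\tilde B\in\langle F\rangle$ represent $A$, $B$, the difference $y\circ\tilde A - x\circ\tilde B$ drops in degree but need not vanish, and one would have to correct it inductively. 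This is a prior obstruction to what you flag as ``the delicate point'', and the ``iterative correction'' you invoke is not specified at a level where one can check that it terminates or is even possible. The paper avoids all of this by reasoning with the actual ideal $I$.

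Two further issues. The assertion that $\m_S^{s-k+1}\circ F$ ``already accounts for all of $\langle F\rangle_{k-1}$ at top degree'' is not obvious when $F$ is inhomogeneous --- it is exactly the kind of statement governed by the symmetric decomposition --- and should not be waved through as immediate. Finally, the last upgrade does not need any iteration along the Loewy filtration: once $\deg G\leq k$ and $x\circ G = \alpha\circ F\in\langle F\rangle$ for some $\alpha\in\m_S$, the bound $\deg(x\circ G)\leq k-1$ gives $\m_S^{k}\circ(x\circ G)=0$, hence $\ov{\alpha}\in(0:\m_F^{k})\cap\m_F$; Remark~\ref{ref:square} then yields $\ov{\alpha}\in\m_F^{2}$ in a single step, and similarly for $y\circ G$. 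That one-shot application is the only role Remark~\ref{ref:square} plays.
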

 \begin{proof}
  Let $\m_S=(x,y)S.$   We know that $\ann_S(F)$ is a complete intersection $(f, g)$ where
     $\order(f) = d$ and $\order(g) = k$ and $g\not\in (f) \cap \m_S^k + \m_S^{k+1}$.
     Consider the ideal $I = (f, xg, yg)$. 
     The vector space $(f, g) / I$ is nonzero and spanned
     $K$-linearly by $g$, hence it is one-dimensional. Therefore, also the space
    $I^{\perp} / (f, g)^{\perp} = I^{\perp} / \spann{F}$
    is nonzero and one-dimensional. Choose $G\in I^{\perp} \setminus \spann{F} $ such that the image of $G$ spans
    $\frac{I^{\perp}}{\spann{F}}$.  Then on the one hand, we have $G\not\in \spann{F},$ so
    $\spann{G} \cap
    \spann{F} \subset \m_S \circ \spann{G}$. On the other hand, $\spann{G} \subset
    I^{\perp},$ which implies that $\dim_{K} \spann{G}/(\spann{F}\cap \spann{G}) \leq 1$. Since
    $\dim_{K} \spann{G} / \m_S \circ \spann{G} = 1$, this shows that $\spann{F}\cap
    \spann{G} = \m_S \circ \spann{G}$, so $x\circ G, y\circ G\in \langle
    F\rangle$. Since $\langle F, G\rangle = I^{\perp}$, it follows that $I =
    \ann_S(F, G)$.

    We now compute the Hilbert function. Since $g$ has order $k$, we
    have $xg, yg\in \m_S^{k+1}$.
    Since $g\not\in (f) \cap \m_S^k+ \m_S^{k+1}$ we see that the natural surjection
    \[
        \frac{\m_S^k}{\m_S^{k+1} + I \cap \m_S^k} = \frac{\m_S^k}{\m_S^{k+1} + (f) \cap \m_S^k} \to \frac{\m_S^k}{\m_S^{k+1} + (f, g) \cap \m_S^k} =  \frac{\m_S^k}{\m_S^{k+1} + (f) \cap \m_S^k+(g)}
    \]
    is not an isomorphism. This shows that the Hilbert functions of $S/(f,g)$ and
    $S/I$ differ in position $k$. Since the difference of
    dimensions of these algebras is one,  $k$ is the only position where they
    differ, which
    proves that $S/I$ has the Hilbert function $h'$. From
    the equality of $h$ and $h'$ for arguments higher than $k$,  we deduce
    $\langle F,G \rangle \cap K_{DP}[X,Y]_{\leq i} + K_{DP}[X,Y]_{< i}=
    \langle F \rangle \cap K_{DP}[X,Y]_{\leq i} + K_{DP}[X,Y]_{< i}$
    for all $i >k.$  Hence we may assume that $\deg(G) \leq k.$ Indeed,  suppose $c:=\deg(G)>k.$ Then by the equality $\langle F,G \rangle \cap K_{DP}[X,Y]_{\leq c} + K_{DP}[X,Y]_{< c}= \langle F \rangle \cap K_{DP}[X,Y]_{\leq c} + K_{DP}[X,Y]_{< c},$ we deduce that  $G \in \langle F \rangle \cap K_{DP}[X,Y]_{\leq c} + K_{DP}[X,Y]_{< c}.$ Write $G=u \circ F+H$ where $u \circ F \in K_{DP}[X,Y]_{\leq c}$ and $H \in K_{DP}[X,Y]_{< c}.$
 {We have $I^\perp/\langle F \rangle =K \{G\} =K \{ H\}$, so we can
replace $G$ by $H$ and hence assume that it has degree $<c$}.  Continuing like
this we may assume that $G$ has degree $\leq k$. {In fact, its leading
form is a nonzero element of degree $k$ even modulo $\langle F\rangle$, hence $\deg(G) = k$.}
%In particular, we have $F\not\in \langle G\rangle$.

    Let $t = \max(i\ |\ h(i) \geq 2)$. Since $d\geq 3,$ we have $t \geq k$.
    Since $\deg(G)\leq k,$ we have $\deg(x\circ G), \deg(y\circ G)\leq k-1$.  
{The map $f: S/\ann_S(F) \to \langle F\rangle $ given by $f(\ov{u}) = u \circ F$ is an isomorphism where $\ov{u}$ denotes the image of $u$ in $S/\ann_S(F).$ Since $x\circ G \in \langle F\rangle$,  there exists $\alpha \in S$ such that $f(\ov{\alpha}) = \alpha \circ F=x \circ G$.  
Then $\ov{\alpha} \in \m_F \cap (0 : \m_F^{k})$, where $\m_F$
    is the maximal ideal of $S/\ann_S(F).$ Indeed
    $\deg(x \circ G) \leq k-1$ implies that $\m_S^k \circ (x \circ G)=0$, which gives that $\m_S^k \circ (\alpha \circ F)=0.$ Hence $\m_S^k \alpha \in \ann_S(F)$, that is,  $\ov{\alpha} \in (0:\m_F^k)$.  }
    By Remark~\ref{ref:square} we have
    $\m_F \cap (0 : \m_F^{k}) = \m_F^2 \cap (0 : \m_F^k)$,  so that  $\ov{\alpha} \in \m_F^2. $  Hence $x \circ G=\alpha  \circ F \in \m_S^2\circ F$.  Similarly,  $y\circ G\in \m_S^2\circ F$.
    Hence $F$ and $G$ indeed satisfy the
    conditions of Proposition~\ref{Prop:PropertiesOfJ}.
    This concludes the proof.
 \end{proof}

 \begin{remark}
     Lemma~\ref{ref:existenceOfG} can be made constructive for certain $F$
     as follows. Fix any $h$ as in the Lemma and take $F = \sum_{i=1}^{s} \ell_i^{[e_i]}$; where $e_1\geq
     e_2 \geq  \ldots \geq e_s$ exist and are determined uniquely by requiring
     that $h$ is the Hilbert function of $S/\ann_S(F)$, while
     $\ell_i\in K_{DP}[X, Y]_1$ for all $i$ {are $K$-linearly independent}.  We then see that $\langle F\rangle \cap
     K_{DP}[X, Y]_{\leq k-1}  = \langle \ell_1^{[k-1]}, \ldots ,
     \ell_s^{[k-1]}\rangle$ (modulo $K_{DP}[X,Y]_{<k-1}$).   {If $s \geq k+1$,} then $G = \ell_s^{[k]}$ satisfies the conditions of Proposition~\ref{Prop:PropertiesOfJ}. 
 \end{remark}

    \begin{theorem} \label{1334}
        Let $h$ be a $(1,3,3,4)$ sequence with {$\Delta(h)=1$} or {$\Delta(h)=2$} having  {a} unique fall by two at the peak position.   
        Then there exists a complete intersection ideal $I$ having the Hilbert function $h$.
    \end{theorem}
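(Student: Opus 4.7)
My plan is to reduce the problem to codimension two via Proposition~\ref{Prop:PropertiesOfJ}, which lifts a suitable pair $(F, G) \in K_{DP}[X, Y]$ to a codimension-three complete intersection. Given the target Hilbert function $h$, set $h'_1 := 2$ and $h'_i := h_i$ for $i \neq 1$. It then suffices to construct $F, G \in K_{DP}[X, Y]$ that are minimal generators of $\langle F, G\rangle$, satisfy $x \circ G, y \circ G \in \m_S^2 \circ F$, and realize $\HF_{S/\ann_S(F, G)} = h'$; Proposition~\ref{Prop:PropertiesOfJ} will then furnish a codimension-three CI $I \subset R$ with $\HF_{R/I} = h$.

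To construct $(F, G)$, I would invoke Lemma~\ref{ref:existenceOfG}. Let $t$ be the last peak position of $h$ and define $h^F$ from $h'$ by decreasing the entry at position $t$ by one, leaving all other entries unchanged. In case (II) where $\Delta(h) = 1$, the result is $h^F_t = h^F_{t+1} = d - 1$, and in case (III) where $h$ has fall by two at the peak, one gets $h^F_t = d - 1$ and $h^F_{t+1} = d - 2$. A short check shows that $\Delta(h^F) \leq 1$ in each case, so by~\eqref{Eqn:codimension2Gsequence} the sequence $h^F$ is the Hilbert function of some codimension-two complete intersection, yielding an $F \in K_{DP}[X, Y]$ with $\HF_{A_F} = h^F$. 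Lemma~\ref{ref:existenceOfG} applied with $k = t$ then produces $G$ verifying the hypotheses of Proposition~\ref{Prop:PropertiesOfJ}, which delivers the required CI.

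The main obstacle I foresee lies in the boundary case where the peak plateau of $h$ at value $d$ has length one (that is, $t = d-1$ so that only $h_{d-1}$ attains the value $d$). There $h^F$ as defined above has maximum $d-1$ rather than $d$, so the hypothesis of Lemma~\ref{ref:existenceOfG} fails and a bumping construction cannot realize $h'$ as $\HF_{S/\ann_S(F,G)}$ with $F, G$ minimal generators. I would address this edge case by a separate construction: starting from a codimension-two complete intersection $(f_1, f_2) \subset S$ realizing $h'$, one writes down an explicit ideal $I = (xz - U, yz - V, z^2 - W) \subset R$ with $U, V, W \in S$ chosen so that $I \cap S \supseteq (f_1, f_2)$, and then verifies by direct computation (using Grauert's division theorem as in the proof of Proposition~\ref{Prop:HFOfJ}) that the three generators form a regular sequence whose associated graded ideal produces exactly the Hilbert function $h$.
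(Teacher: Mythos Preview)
Your main line of argument matches the paper's proof: define $h'$ by $h'_1 = 2$, define $h^F$ (the paper's $h''$) by decreasing the peak entry of $h'$ by one, realize $h^F$ as $\HF_{A_F}$ via~\eqref{Eqn:codimension2Gsequence}, invoke Lemma~\ref{ref:existenceOfG} to produce $G$, and finish with Proposition~\ref{Prop:PropertiesOfJ}. When the peak plateau has length at least two this is exactly what the paper does.

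The gap is in your handling of the edge case $r=0$ (peak plateau of length one). You propose to start from a codimension-two complete intersection $(f_1,f_2)\subset S$ with Hilbert function $h'$, but in case (III) this is impossible: for instance $h=(1,3,3,4,2,1)$ gives $h'=(1,2,3,4,2,1)$, which has $\Delta(h')=2$ and so, by~\eqref{Eqn:codimension2Gsequence}, is \emph{not} the Hilbert function of any codimension-two complete intersection. Even in case (II) your sketch does not say how to choose $U,V,W$ or why the resulting $I$ is Artinian with $I\cap S$ equal to (rather than merely containing) your ideal. The paper avoids leaving the $(F,G)$ framework here. When $r=0$ one has $h^F=(1,2,3,\ldots,d-1,d-1,\ldots)$, so $\HF_{A_F}(i)=i+1$ for all $i\le d-2$, which forces $K_{DP}[X,Y]_{\le d-2}\subseteq\langle F\rangle$. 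Since $\HF_{A_F}(d-1)=d-1<d$, there exists $G\in K_{DP}[X,Y]_{d-1}\setminus\langle F\rangle$; then $x\circ G,\,y\circ G$ have degree $\le d-2$, hence lie in $\langle F\rangle$, and since $\deg F=s\ge d$ the operator realizing each has order $\ge s-d+2\ge 2$, so $x\circ G,\,y\circ G\in\m_S^2\circ F$. One then checks $\HF_{S/\ann_S(F,G)}=h'$ and applies Proposition~\ref{Prop:PropertiesOfJ} as before.
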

    \begin{proof}
      Let $d:=\max h$ be repeated $r+1$ times in $h.$  Let $h'$ be the Hilbert function $h$ with a single change so that
        $(h')_1 = 2$. Let $h''$ be the Hilbert function $h'$ with a single
        change at the peak position so that the last $d$ is replaced by $d-1$.
        Then $h''$ is unimodular and $\Delta(h'') = 1$,
        hence by \eqref{Eqn:codimension2Gsequence}  there exists a complete intersection algebra $S/\ann_S(F)$
        with Hilbert function $h''$.

        We will now construct $G$ such that $F$, $G$ satisfy the
        conditions  of
        Proposition~\ref{Prop:PropertiesOfJ} and, moreover, $S/\ann_S(F,G)$
        has the Hilbert function $h'$.

        Consider first the case $r\geq 1$. Then $\max h'' = d$ and we take
        $G$ as in
        Lemma~\ref{ref:existenceOfG}.
        It remains to consider the case $r = 0$. In this case
        $h'' = (1, 2, 3,  \ldots , d-1, d-1, \ldots )$.  It follows that
        $\spann{F}$ contains the space of degree $\leq d-2$ polynomials,  but
        there exists a polynomial $G\in K_{DP}[X,Y]$ of degree $d-1$ not in $\spann{F}.$ Then $x \circ G,~y\circ G \in \m_S^2 \circ F $  by the comparison between $h' $ and $h".$ Such
        a $G$ satisfies all our conditions.   {Moreover,  $S/\ann_S(F,G)$ has the Hilbert function $h'$. }
        Hence by Proposition ~\ref{Prop:PropertiesOfJ} there exists a complete intersection ideal $I$ with the Hilbert function $h.$
    \end{proof}

We remark that the proof of the above theorem is constructive.  
That is,  given a $(1,3,3,4)$ sequence $h$ with $\Delta(h)=1$ or {$\Delta(h)=2$ having}  {a} unique fall by two at the peak position,   
the above result gives an algorithm to construct a complete intersection ideal $I$ with the Hilbert function $h$.  In fact,  {the} following is  {an} algorithm for this.  Recall that if $h$ is a $(1,3,3)$ sequence with $h_3 \leq 3,$ then Theorem \ref{thm:h3Atmost3} gives an explicit construction of a complete intersection ideal $I$ with the Hilbert function $h.$
\vskip 2mm
\hrule
\vskip 0.5mm
\hrule
\vskip 2mm
\noindent \textcolor{blue}{\underline{\bf Algorithm:}} \\
\noindent \underline{{\bf Given:}} a $(1,3,3,4)$ sequence $h$ with {$\Delta(h)=1$} or with {$\Delta(h)=2$ having} a  unique fall by two at the peak position.\\ 
\underline{\noindent {\bf Aim:}} To construct a complete intersection ideal $I$ with the Hilbert function $h$.\\
\underline{{\bf Step 1:}} Let $d=\max h$ and $t$ be the peak position of $h$.  Define
\[
    h'_i=\begin{cases}
h_i & \mbox{ if } i \neq 1 \mbox{ or } t\\
2 & \mbox{ if } i=1\\
d-1 & \mbox{ if } i=t.
\end{cases}\]
\vskip 2mm
\noindent \underline{{\bf Step 2:}} Construct $F \in K_{DP}[X,Y]$ such that $\ann_S(F)$ has the Hilbert function $h'$. 
\vskip 2mm
\noindent \underline{{\bf Step 3:}} Let $\ann_S(F)=(f,g)$ with $\order(f) \leq \order(g).$  
\vskip 2mm
\noindent \underline{{\bf Step 4:}} If $\max h' = d-1,$ then choose any $G \notin \langle F \rangle$ of degree $d-1.$ Suppose $\max h'=d.$ Then consider $I=(f,xg,yg)$ and find $G \in I^\perp$ such that $G \notin \langle F \rangle$ and $\deg(G) \leq t$.  Then $x \circ G, ~y \circ G \in \m_S^2 \circ F.$ Replace $G$ by $G'$,  if necessary,  so that  $y \circ G=a_2 \circ F$ and $x $ divides $a_2$. 
\vskip 2mm
\noindent \underline{{\bf Step 5:}} Write $y \circ G=(x a_2') \circ F.$\\  \\
\underline{{\bf Step 6:}}  Find the syzygy matrix in a minimal $S$-free resolution of $S/\ann_S(F,G)$.  It will be of the form
 \[
         N = \begin{pmatrix}
             d_{11} &  d_{12}& -xa_2' \\
             d_{21} & -x & y \\
         \end{pmatrix}^t.
     \]
\vskip 2mm     
\noindent \underline{{\bf Step 7:}} Write $d_{12}=xd_{12}'+yd_{12}''$ .  Set $U=xU_1+yU_2,$ $V=xV_1+yV_2$ where 
 $U_1 = -\frac{1}{2}(d_{12}' + d_{21})$, $U_2 =
     -d_{12}''$, $V_1 = -a_2'$, $V_2 =
     \frac{1}{2}(d_{12}' - d_{21})$,   and $W = U_2V_1 + V_{2}^2 - d_{11}$. \\ \\
\underline{{\bf Step 8:}} Then $I = (xz - U, yz- V, z^2 -W)$ is an Artinian complete intersection ideal with the Hilbert function $h.$

\vskip 2mm     

\hrule
\vskip 0.5mm
\hrule
\vskip 2mm
We illustrate this algorithm by the following example.

\begin{example}
\label{Example:ConsCI}
Consider the O-sequence $h=(1,3,3,4,2,1)$ which is a $(1,3,3,4)$ sequence with {$\Delta(h)=2$ having}  {a} unique fall by two at the peak position.   Let us find a complete intersection ideal with the Hilbert function $h$.   Define $h'=(1,2,3,3,2,1).$ We first find a complete intersection ideal with the Hilbert function $h'$  (see for instance \cite{Ber09}, \cite{MR15} and \cite{HW20} for the possible constructions).  Consider $F =
 X^3Y^2.$ Then $A_F$ has the Hilbert function $h'$.  Here $\max h'=3=\max h-1.$ Hence we choose $G \notin \langle F \rangle$ of degree $3.$ 
Let $G = Y^3$.   Then $G \notin \langle F \rangle$ and $x \circ G,~y \circ G\in \m_S^2 \circ F $.  Moreover,  $S/\ann_S(F,G)$ has the Hilbert function $(1,2,3,4,2,1)$.  The syzygy
 matrix of $J: = \ann_S(F, G) = (x^4, y^4, y^3x)$ is
 \[
     \begin{bmatrix}
         y^3 & 0 & -x^3\\
         0 & -x & y
         \end{bmatrix}^t.
 \]
 Following Proposition~\ref{Prop:PropertiesOfJ}, we put $U = 0$, $V = -x^3$, $W
 = -y^3$, so that $I = (xz, yz+x^3, z^2 + y^3)$ and indeed this is a
 complete intersection ideal with the Hilbert function $h = (1, 3, 3, 4, 2, 1)$.
\end{example}

 \section{Realizable  symmetric decompositions of $(1,3,3)$ Gorenstein sequences}
 In this section we discuss the symmetric decomposition of $(1,3,3)$ Gorenstein sequences and prove Theorem \ref{thm:Q-deco-133}.  
We know that every Gorenstein sequence admits  a symmetric decomposition,  but in general the symmetric decomposition is not unique. 
In two variables, where Gorenstein is 
equivalent to complete intersection, the Hilbert function determines the symmetric decomposition. 
We show that this is the case also for the complete intersection rings with  a $(1,3,3)$ sequence.  
Moreover,  we describe all possible symmetric decompositions of Gorenstein ideals.  

\begin{proposition}
 \label{Prop:Q-decOf133}
Let 
\[ 
h := \left(1, 3, 3, \underbrace{3,  \ldots , 3}_u, \underbrace{2, \ldots
       ,2}_v, \underbrace{1, \ldots, 1}_w, 1\right) 
       \]
       be an  O-sequence. Let $I$ be a Gorenstein ideal with  {the} Hilbert function $h$. Then
\begin{asparaenum} 
\item if $I$ is additionally a complete intersection, then its symmetric
    decomposition is unique and given by $\mathfrak{D}_1=(\DD(0),\ \DD(w),\ \DD(w+v))$, where
\begin{align*}
    \DD(0)& =(1,1,\ldots,1,1),\\
    \DD(w)&=(0,1,1,\ldots, \DD(w)_{u+v+2}=1),\\
    \DD(w+v)&=(0,1,1,\ldots,\DD(w+v)_{u+2}=1),
\end{align*}
with the understanding that  {if $w$ is zero, then $\DD(0)$, $\DD(w)$ are
    replaced by a single vector which is their sum, if $v$ is zero then the
    same happens for $\DD(w)$, $\DD(v+w)$, and if $w=v=0$ then the symmetric
decomposition consists of a single $\DD(0)$ which is the sum of the three.}
\item If  {$\Delta(h) = 2$}, then the symmetric decomposition of $I$ is $\mathfrak{D}_1.$ 
\item\label{it:decOf133Third} If {$\Delta(h)=1$}, then the symmetric decomposition of $I$ is either
    $\mathfrak{D}_1$ or  $\mathfrak{D}_2$, where $\mathfrak{D}_2$ is the
    following: let $\delta=(0,1)$ and $(\DD(0),\DD(1),\ldots,\DD(s-2))$ be the
    unique symmetric decomposition of $h-\delta.$ Then 
\[
\mathfrak{D}_2=(\DD(0),\DD(1),\ldots,\DD(s-2)+\delta).
\]
%Moreover,  $\mathfrak{D}_2$ is realized  by $A_{F}$ where  $F=F'+Z^2$ and  $F' \in K_{DP}[X,Y]$ is such that $S/\ann_S{(F')}$ has Hilbert function $h-\delta.$ 
\end{asparaenum}
\end{proposition}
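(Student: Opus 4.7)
My plan is to analyse the Macaulay dual generator $F\in\D$ of a Gorenstein ideal $I=\ann_R(F)$, using the identification $Q(0)\cong R/\ann_R(f_s)$ where $f_s$ is the top degree form of $F$. In this way $\DD(0)$ is determined by $f_s$, and the remaining summands $\DD(a)$ for $a\ge 1$ are constrained by the reflexivity $\Hom_K(Q(a)_i,K)\cong Q(a)_{s-a-i}$, the requirement $h=\sum_a\DD(a)$ and the pointwise bound $\DD(a)\le h-\sum_{b<a}\DD(b)$.

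For the existence of $\mathfrak{D}_1$ (in all three parts), I would use the explicit complete intersection $I=\ann_R(F)$ from Theorem~\ref{thm:h3Atmost3}, with $F=X^{u+v+w+3}+Y^{u+v+3}+Z^{u+3}+XYZ$ (or $F=XYZ$ in the degenerate case $u=v=w=0$). For $u+v+w\ge 1$ the leading form is $f_s=X^s$, so $Q(0)\cong K[x]/(x^{s+1})$ has Hilbert function $(1,1,\ldots,1)$, giving the claimed $\DD(0)$. A direct computation of the filtration $C(\bullet)$ on this $F$ identifies the summands $Y^{u+v+3}$ and $Z^{u+3}$ with the remaining pieces $\DD(w)$ and $\DD(w+v)$ of socle degrees $u+v+2$ and $u+2$ respectively; the degenerate cases where some of $u$, $v$, $w$ vanish correspond to the prescribed collapses of components in the statement.

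For the uniqueness in (1) and (2), the strategy is to pin down $\DD(0)$ using that $\DD(0)\le h$ pointwise, that $\DD(0)_0=\DD(0)_s=1$, and that $\DD(0)$ is symmetric about $s/2$. For (1), the complete intersection hypothesis combined with the Buchsbaum--Eisenbud structure theorem restricts $f_s$ so that $\DD(0)$ must take the precise shape dictated by $\mathfrak{D}_1$; any $f_s$ yielding $\DD(0)$ of higher embedding dimension would force a fourth minimal generator of $\ann_R(F)$, contradicting the CI property. For (2), the condition $\Delta(h)=2$ corresponds to $v=0$, and any putative $\mathfrak{D}_2$-type decomposition would require $h-\delta$ to be a valid codimension-two Gorenstein sequence; however the fall by two in $h$ persists after subtracting $\delta=(0,1)$, violating~\eqref{Eqn:codimension2Gsequence} and ruling such a candidate out. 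Once $\DD(0)$ is fixed, reflexivity combined with the pointwise bounds on the remaining summands forces the exact shape of $\mathfrak{D}_1$.

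For part (3), realising $\mathfrak{D}_2$ amounts to producing a non-CI Gorenstein ideal whose top form $f_s$ lies in two variables. Concretely I would take $F=F'+G$ where $F'\in K_{DP}[X,Y]$ is chosen so that $\ann_S(F')$ is a codimension-two complete intersection of Hilbert function $h-\delta$ (available by~\eqref{Eqn:codimension2Gsequence} since $\Delta(h-\delta)\le 1$ when $\Delta(h)=1$), and $G$ is a carefully chosen low-degree polynomial involving $Z$ that contributes the $\delta=(0,1)$ shift to the bottom component. The main obstacle throughout is the uniqueness step: cleanly ruling out intermediate shapes of $\DD(0)$---in particular those with $f_s$ of embedding dimension two in the CI case---requires a delicate interplay between the minimal free resolution of $\ann_R(F)$ and the graded structure of $F$, combined with the combinatorial constraints imposed by symmetry and the pointwise bounds.
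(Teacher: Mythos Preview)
Your proposal has two genuine gaps that prevent it from going through.

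\textbf{Missing combinatorial classification.} You never establish that $\mathfrak{D}_1$ and $\mathfrak{D}_2$ are the \emph{only} combinatorially possible symmetric decompositions of $h$. Your strategy of ``pinning down $\DD(0)$'' via symmetry and pointwise bounds is not carried out, and in fact the constraints you list (symmetry, $\DD(0)_0=\DD(0)_s=1$, $\DD(0)\le h$) do not by themselves determine $\DD(0)$. The paper handles this by a direct combinatorial argument: since every partial sum $\sum_{b\le a}\DD(b)$ is an O-sequence (hence unimodal for a $(1,3,3)$ shape), one takes the least $a$ with $\sum_{b\le a}\DD(b)_1=3$ and does a short case analysis on $\max\sum_{b<a}\DD(b)$, which forces the decomposition to be $\mathfrak{D}_1$ or $\mathfrak{D}_2$. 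Without this step, your arguments for (1) and (2) only rule out $\mathfrak{D}_2$ rather than all alternatives to $\mathfrak{D}_1$.

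\textbf{The CI argument via Buchsbaum--Eisenbud does not work as stated.} Your claim that ``any $f_s$ yielding $\DD(0)$ of higher embedding dimension would force a fourth minimal generator'' fails already for $h=(1,3,3,2,1)$ (so $u=0$, $v=1$, $w=0$): here $\mathfrak{D}_1$ has $\DD(0)=(1,2,2,2,1)$ and $\mathfrak{D}_2$ has $\DD(0)=(1,2,3,2,1)$, both of embedding dimension two, so the embedding dimension of $f_s$ does not separate them. The paper's mechanism is different and more specific: if the decomposition is $\mathfrak{D}_2$ then $\DD(s-2)\neq 0$, and by \cite[Proposition~4.5]{cjn13} one may take $F=F'+Z^2$ with $F'\in K_{DP}[X,Y]$. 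Then $xz$, $yz$, $z^2-\sigma$ (with $\sigma\circ F'=1$) lie in $\ann_R(F)$ with independent quadratic parts, so would have to be the full minimal generating set of a CI; but any minimal generator of $\ann_S(F')\subset K[\![x,y]\!]$ also annihilates $F$ yet is not in $(xz,yz,z^2-\sigma)$, giving a fourth generator. Your invocation of Buchsbaum--Eisenbud does not supply this normal form for $F$, which is the crux. (For realizing $\mathfrak{D}_2$ in part~(3), the explicit choice $G=Z^2$ suffices; your ``carefully chosen $G$'' is on the right track but unnecessarily vague.)
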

 The decompositions are realizable in appropriate classes.
 The quotient by the complete intersection ideal in Theorem \ref{thm:h3Atmost3} has symmetric
 decomposition $\mathfrak{D}_1$. If $h-\delta$ is as
 in~(\ref{it:decOf133Third}) and $F' \in K_{DP}[X,Y]$ is such that
 $A_{F'}$ has Hilbert function $h-\delta$, then the Gorenstein
 algebra $A_{F'+Z^2}$ has symmetric decomposition $\mathfrak{D}_2$.
\begin{proof}
We prove all three points together.
 {As remarked above, $\mathfrak{D}_1$ and $\mathfrak{D}_2$ are
realizable.}
%We check directly that $\mathfrak{D}_1$ is a symmetric decomposition of $h$ and is realized by the complete intersection ideal in Theorem \ref{thm:h3Atmost3}.
%Now let $I$ be a Gorenstein ideal with the Hilbert function $h$ and $v\neq 0.$ Then $\Delta(h)=1.$ Let $\delta$ be as in the statement.
%Then $h-\delta$ is a codimension two Gorenstein sequence and hence by  \cite[Theorem 2.2]{Iar94} $h-\delta$ admits a unique symmetric decomposition,  say $(\DD(0),\DD(1),\ldots,\DD(s-2)).$ Then
%$$\mathfrak{D}_2=(\DD(0),\DD(1),\ldots,\DD(s-2)+\delta)$$
%is a symmetric decomposition of $h.$  Set $S:=K[\![x,y]\!].$  Let $F' \in K_{DP}[X,Y]$ be such that $S/\ann_S(F')$ has the Hilbert function $h-\delta.$ Then  it can be verified that $\mathfrak{D}_2$ is realized by $A_{F} $ where $F=F'+Z^2.$ 
We prove that there are no possible symmetric decompositions for $h$ other than $\DD_1$ and $\DD_2$. Take any decomposition $\mathfrak{D}$.  Observe that any $(1,3,3)$ O-sequence is unimodal.  
Also,  for any $a$, the partial sum of the first $a$ rows of $\mathfrak{D}$ is an O-sequence,  and hence 
in particular,  
is unimodal in our case.   Let $a$ be the smallest integer such that $\sum_{i=0}^{a} \mathfrak{D}(a)_1 = 3$.  Consider $\mathfrak{D}({<}a) := \sum_{i=0}^{a-1} \mathfrak{D}(a)$. If $\max \mathfrak{D}({<}a) = 2$ then by unimodality of $\mathfrak{D}({<}a) + \mathfrak{D}(a)$ we see that $\mathfrak{D}(a) = (0,1, \ldots ,1)$. It follows also that $h = \mathfrak{D}({<}a) + \mathfrak{D}(a)$ and hence $\mathfrak{D}({<}a)$ is uniquely determined as the unique decomposition of $h - \mathfrak{D}(a)$. Similar argument works for $\max \mathfrak{D}({<}a) = 1$. In both cases we recover $\mathfrak{D} = \mathfrak{D}_1$. In the case $\max \mathfrak{D}({<}a) = 3$ it follows that $\mathfrak{D}({<}a)_2 = 3$. By unimodality of $\mathfrak{D}({<}a)$ this vector agrees with $h$ for all arguments greater than one. Hence $a = s-2$ and so $\mathfrak{D} = \mathfrak{D}_2$.
 {By construction of $\mathfrak{D}_2$, we have $\Delta(\sum \mathfrak{D}_2) = 1$,
hence this case cannot occur for $\Delta(h) = 2$.}

    We claim that the decomposition $\mathfrak{D}_2$  is not realized by a
    complete intersection ideal.   Indeed,  suppose that $\mathfrak{D}_2$ is
    realized by a complete intersection ideal $I\subseteq R$.  Since the
    $(s-2)$-th part of $\mathfrak{D}_2$ is non-zero,  by
    \cite[Proposition~4.5]{cjn13} the algebra $R/I$ is isomorphic to
    $R/\ann_R(F)$ where $F=F'+Z^2$ and $F' \in K_{DP}[X,Y].$ We have
    $xz,~yz,~z^2-\sigma \in \ann_R(F)$, where $\sigma \in K[\![x,y]\!]$ is
    such that $\sigma \circ F'=1$.  Since $\ann_R(F)$ is a complete
    intersection ideal, these form its minimal generating set. This is a
    contradiction because any minimal generator $g$ of  $\ann_{K[\![x,y]\!]}(F')$ is in $\ann_R(F) \setminus (xz,yz,z^2-\sigma).$ Hence $\mathfrak{D}_2$ is not realized by a complete intersection ideal.  

This proves that if $I$ is a complete intersection ideal with the Hilbert
function $h,$ then its symmetric decomposition is determined by $h$, namely is
$\mathfrak{D}_1$.
\end{proof}

We illustrate the above discussion by the following example.

  \begin{example}
\label{Example:IarQue1}
Let $h=(1,3,3,2,1).$ Then $h$ is a complete intersection sequence by
 {Proposition~\ref{Prop:Q-decOf133} and, in its language, has $(u,v,w) =
(0,1,0)$}.  Here $h$ admits two symmetric decompositions each of which is {realizable} by a Gorenstein ideal, yet only one by a complete intersection ideal. 
The decomposition 
\[
\mathfrak{D}_1=(\DD(0)=(1,2,2,2,1),\DD(1)=(0,1,1))
\]
is realized by a complete intersection $A_F$ where $F=X^4+Y^4+Z^3+XYZ.$ The  decomposition
\[
\mathfrak{D}_2=(\DD(0)=(1,2,3,2,1),\DD(2)=(0,1))
\]  
is realized by $A_F$ where $F=X^2Y^2+Z^2.$ See \cite[Discussion 3.7]{MR18}.
\end{example}

Now we discuss realizable symmetric decompositions of $(1,3,3,4)$ Gorenstein sequences.  
%For this we need the following lemma.

\begin{proposition}
\label{Prop:Q-decOf1334}
Let $h$ be a $(1,3,3,4)$ complete intersection sequence and  {$I$ is a
    Gorenstein ideal with $\HF_{R/I} = h$.} Then
   \begin{asparaenum} 
\item \label{Prop:UniqueDecoOfCI} if $I$ is a complete intersection, then the
    symmetric decomposition for $I$ is determined by $h$ as follows.
    Let $k$ be the peak position of $h$ and let $\delta = (0, 1, 0, \ldots , 0, 1, 0)$
    be a vector with ones at position $1$ and $k$. Then the symmetric
    decomposition $\mathfrak{D}$ of $h$ is obtained as the unique decomposition of $h -
    \delta$ into symmetric vectors, with $\delta$ added to the appropriate
    symmetric vector.
\item\label{it:partb} If $\Delta(h)=2$, then the symmetric decomposition for
    $I$ is again unique, equal to $\mathfrak{D}$ above.
\item\label{it:partc} If $\Delta(h)=1$, then the symmetric decomposition for $I$ is either
        $\mathfrak{D}$ above or $\mathfrak{D}'$ where $\mathfrak{D}'$ is the following:
let $\delta'=(0,1)$ and $(\DD(0),\DD(1),\ldots,\DD(s-2))$ be the unique symmetric decomposition of $h-\delta'$.  Then
\[
\mathfrak{D}'=(\DD(0),\DD(1),\ldots,\DD(s-2)+\delta').
\]
    \end{asparaenum}
\end{proposition}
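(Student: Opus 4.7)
The plan is to mirror the strategy used in Proposition~\ref{Prop:Q-decOf133}: first enumerate all numerically possible symmetric decompositions $\mathfrak{D}_A$ for a Gorenstein algebra $A$ with $\HF_A = h$, then show by a direct construction that each candidate is realized by some Gorenstein ideal, and finally rule out the candidate $\mathfrak{D}'$ for complete intersections by an apolarity argument identical in spirit to the one used in the proof of Proposition~\ref{Prop:Q-decOf133}\eqref{it:decOf133Third}.

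First I would enumerate the decompositions. Let $\mathfrak{D} = (\DD(0),\ldots,\DD(s-2))$ be any symmetric decomposition of $h$. Each partial sum $\mathfrak{D}({\leq}a) := \sum_{i=0}^{a}\DD(i)$ is the Hilbert function of a quotient of $\operatorname{gr}_{\m}(A)$ by a graded ideal, so is itself an O-sequence; in particular it is unimodal (by Macaulay's bound applied to $(1,3,\ldots)$). I would look at the first index $a_0$ where $\mathfrak{D}({\leq}a_0)_1=3$ and compare $\mathfrak{D}({<}a_0)_2$ with $h_2 = 3$. The only two feasible scenarios under the unimodality constraint and the shape given in Theorem~\ref{Equi:GorAndCIIntro}(III) are: (a) $\mathfrak{D}({<}a_0)_2<3$, which, combined with symmetry, forces $\mathfrak{D}({<}a_0)$ to be the unique symmetric decomposition of $h - \delta$ (with $\delta=(0,1,0,\ldots,0,1,0)$ carrying a $1$ at the peak position $k$), producing $\mathfrak{D}$; or (b) $\mathfrak{D}({<}a_0)_2 = 3$, in which case $a_0 = s-2$, the last piece $\DD(s-2)=(0,1)$, and $\mathfrak{D}({<}s-2)$ is the unique symmetric decomposition of $h-\delta'$, producing $\mathfrak{D}'$. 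Note that case (b) requires $\Delta(h-\delta')\leq 1$, so that $h-\delta'$ admits a Gorenstein codimension-two realization; this forces $\Delta(h)=1$, giving \eqref{it:partb}.

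Next, I would realize the two decompositions. For $\mathfrak{D}$, the construction in Theorem~\ref{1334} itself produces a complete intersection ideal whose decomposition is $\mathfrak{D}$: indeed, the algorithm reduces to a two-variable complete intersection with Hilbert function $h-\delta$ and then appends the quadric $z^2-W$, so that the symmetric decomposition of the two-variable piece (uniquely determined by $h-\delta$, by Iarrobino's codimension-two result \cite[Theorem~2.2]{Iar94}) gets $\delta$ added to its top component. For $\mathfrak{D}'$ in case \eqref{it:partc}, I would let $F'\in K_{DP}[X,Y]$ be such that $A_{F'}$ has Hilbert function $h-\delta'$ (possible because $\Delta(h-\delta')\leq 1$, by the classical codimension-two result) and consider $A_{F'+Z^2}$. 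The computation of $\DD(s-2)$ proceeds exactly as in the proof of Proposition~\ref{Prop:Q-decOf133}: the extra summand $Z^2$ contributes a socle element in degree $2$ which survives as a $(0,1)$ at the bottom position of the last symmetric piece.

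Finally, I would rule out $\mathfrak{D}'$ for complete intersection ideals. The argument from Proposition~\ref{Prop:Q-decOf133} applies verbatim: if $I$ realizing $\mathfrak{D}'$ were a complete intersection, then by \cite[Proposition~4.5]{cjn13} (applicable since $\DD(s-2)\neq 0$ has a component in degree one that is a pure square) we would have $R/I \cong R/\ann_R(F'+Z^2)$ with $F'\in K_{DP}[X,Y]$, and then $\ann_R(F'+Z^2)$ contains $xz$, $yz$, $z^2-\sigma$ (with $\sigma\circ F' = 1$) together with every minimal generator of $\ann_{K[\![x,y]\!]}(F')$, making at least four minimal generators, contradicting the complete intersection hypothesis. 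This finishes \eqref{Prop:UniqueDecoOfCI}.

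The main obstacle I anticipate is part of the enumeration step: verifying that no symmetric decomposition other than $\mathfrak{D}$ and $\mathfrak{D}'$ can satisfy the unimodality constraints coming from the somewhat complicated shape of $h$ described in Theorem~\ref{Equi:GorAndCIIntro}(III), particularly when the plateau $h_i=d$ is long (large $r$). I expect that a careful bookkeeping using the symmetry $\DD(a)_i = \DD(a)_{s-a-i}$ and the O-sequence condition on each $\mathfrak{D}({\leq}a)$ suffices, but one must check in the $\Delta(h)=2$ case that the single fall by two at the peak cannot be ``distributed'' between two different summands $\DD(a)$ and $\DD(a')$ in a way not covered by $\mathfrak{D}$.
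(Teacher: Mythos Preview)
Your approach differs substantially from the paper's, and the difference is precisely at your admitted obstacle.

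The paper does \emph{not} enumerate symmetric decompositions numerically. Instead it works structurally with the dual socle generator. After the normalization of Lemma~\ref{Lemma:reductionOFfghStronger}, write $I=\ann_R(T)$ and expand $T=T_0+ZT_1+Z^2T_2+\cdots$ with $T_i\in K_{DP}[X,Y]$ as in Proposition~\ref{Prop:classificationOfArtinianGorenstein}. Since $\HF_{R/I}(1)=3$, there exists $\sigma\in R$ with $\sigma\circ T=Z$; after reduction modulo $(f,g,p)$ one has $\sigma=\sigma_0+\lambda z$ with $\sigma_0\in S$, $\lambda\in K$. The whole proof then pivots on whether $\lambda=0$ or not. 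If $\lambda=0$, one reads off $\sigma_0\circ G=1$, $\sigma_0\circ F=0$, hence $G\notin\langle F\rangle$; the order $i$ of $\sigma$ in $\m_A$ is shown (via Lemma~\ref{lemma:NumCriForGorSeqNew}) to equal the peak position $k$, producing a nonzero class in $Q(s-1-k)_1$ and forcing the decomposition $\mathfrak{D}$. If $\lambda\neq 0$, then $\sigma\bmod I\in\m_A\setminus\m_A^2$ gives $Q(s-2)_1\neq 0$, and moreover $\sigma_0\circ F+\lambda G=0$ yields $G\in\langle F\rangle$, so $\HF_{S/\ann_S(F,G)}=\HF_{S/\ann_S(F)}$ and hence $\Delta(h)=1$; this is the decomposition $\mathfrak{D}'$. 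The dichotomy on $\lambda$ simultaneously establishes uniqueness and the $\Delta(h)=1$ restriction for $\mathfrak{D}'$, with no combinatorial enumeration at all.

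For part~\eqref{Prop:UniqueDecoOfCI} the paper also rules out $\mathfrak{D}'$ differently from you. Rather than invoking \cite[Proposition~4.5]{cjn13} and counting generators, it argues directly: if $\lambda\neq 0$ then $x\sigma,y\sigma\in I$ with linearly independent quadratic parts, hence (since $I$ is a three-generated complete intersection) they are part of a minimal generating set and in particular form a regular sequence; but the syzygy $y(x\sigma)-x(y\sigma)=0$ has coefficients outside $I$, a contradiction. So $\lambda=0$ is forced for complete intersections. Your generator-counting argument via \cite{cjn13} should also work and is a legitimate alternative.

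In short: your plan is reasonable, but the enumeration you flag as the main obstacle is genuinely delicate for $(1,3,3,4)$ sequences (the values of $h$ are no longer bounded by $3$, so the unimodality-of-partial-sums argument from Proposition~\ref{Prop:Q-decOf133} does not transfer cleanly), and you have not carried it out. The paper sidesteps this entirely by the structural $\sigma$-argument, which both pins down the decomposition and explains why $\Delta(h)=2$ excludes $\mathfrak{D}'$.
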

 {Again, the symmetric decompositions are realizable in appropriate
classes. The complete intersection ideal constructed in Theorem \ref{1334} has
symmetric decomposition $\mathfrak{D}$. The decomposition $\mathfrak{D}'$ is
realized  by $A_{F'+Z^2}$ where $F' \in K_{DP}[X,Y]$ is such that
$A_{F'}$ has the Hilbert function $h-\delta'$.
}
\begin{proof} $S:=K[\![x,y]\!]$.
    Proof of~\eqref{Prop:UniqueDecoOfCI}: let $I$ be a complete intersection ideal with the Hilbert function $h.$ 
As in the proof of Proposition \ref{Prop:classificationOfArtinianGorenstein}
we may assume that the ideal $I$ contains elements of the form $f=xz-U$, $g=yz-V$,
    $p=z^2-W$,  where $U, V, W\in S_{\geq 3}$.  Let $T\in K_{DP}[X, Y, Z]$ be such that
$I=\ann_R(T).$ We write
    \begin{equation}\label{eq:TformQdecom}
        T = T_0 + Z\cdot T_1 + Z^2 \cdot T_2 +  \ldots + Z^n\cdot T_n +
        \ldots ,
    \end{equation}
    where $T_i\in
    K_{DP}[X, Y].$  Recall from the proof of Proposition
    \ref{Prop:classificationOfArtinianGorenstein} that $F:=T_0$ and $G:=T_1$
    satisfy \eqref{it:fstG} and \eqref{it:sndG} of Proposition
    \ref{Prop:classificationOfArtinianGorenstein},  and $S/{\ann_S(F,G)}$ has
    Hilbert function $h-\delta'$.
Since $\HF_{R/I}(1) = 3$, we have $Z\in \langle T\rangle$.
      Let
        $\sigma\in R$ be an element such that $\sigma\circ T = Z$.
        Subtracting an element of $(f, g, p)$ from $\sigma$, we put it in the
        form $\sigma = \sigma_0 + \lambda z$, where $\sigma_0\in S$ and
        $\lambda\in K$. Suppose $\lambda\neq 0$. Since $\sigma\circ T\in
        K_{DP}[Z]$, we have $(x\sigma)\circ T = 0 = (y\sigma)\circ T$.
        Therefore, the elements $x\sigma$, $y\sigma$ lie in
    the complete intersection $I$.  Since the quadratic parts of $x \sigma$ and $y \sigma$ are linearly independent,  they are part of a minimal generating set of $I$.  In particular,  $x \sigma, ~y\sigma$ is an $R$-regular sequence.  But the linear syzygy
    $y\cdot(x\sigma) - x\cdot (y\sigma) = 0$ has coefficients outside $I$, a
    contradiction.  This shows that $\lambda = 0$ and $\sigma\in S$.
    Expanding $T$ as in~\eqref{eq:TformQdecom}, we get
    \[
        Z = \sigma\circ F + Z(\sigma\circ G) + Z^2(\sigma \circ T_2)+\cdots+\cdots,
    \]
    which shows that $\sigma \circ G = 1$ and $\sigma\circ F = 0$. In
particular, we get $G\not\in \langle F\rangle$.

    Set $A=R/I$.  Let $\m_A\subset A$ be the maximal ideal of $A$.  Observe
    that $\sigma\mod I$ lies in $(0:\m_A^2)$. By the above considerations, the element
        $\sigma$ is unique modulo $I$.  Let $i$ be the maximal number such that
        $\sigma\mod I\in \m_A^i$.  Then $\sigma\mod I$ is a nonzero
    element of $ Q(a)_{i}$, where $a = s-1-i$,  and hence  {there} is a nonzero element of $Q(a)_1$ as well. 

       We claim that $i$ is the peak position of $h.$ First we show that  $\HF_{S/\ann_S(F, G)}$ and $\HF_{S/\ann_S(F)}$ differ
       at the position $i$.  {Denote by $\m_S$ the maximal ideal of $S$}.
        The kernel of the natural surjection
        \[
        \frac{\m_S^i}{\m_S^{i+1}+\m_S^i \cap \ann_S(F,G)} \to \frac{\m_S^i}{\m_S^{i+1}+\m_S^i \cap \ann_S(F)}
        \]
        is spanned by the class of $\sigma$,
        so the Hilbert functions of $S/\ann_S(F, G)$ and $S/\ann_S(F)$ differ
    at the position $i$. Therefore, by Lemma \ref{lemma:NumCriForGorSeqNew},
    the position $i$ is the peak position of $\HF_{S/\ann_S(F,G)}$.  Since the Hilbert function of $S/\ann_S(F,G)$ is $h$ except at position one,  $i$ is the peak position of $h.$
 Therefore   $h-\delta\geq  0$ position-wise. 
        In fact,  $h-\delta$ is a codimension two O-sequence with
        $\Delta(h-\delta)=1,$ and so  $h-\delta$ is a Gorenstein sequence by
        \eqref{Eqn:codimension2Gsequence}.  Hence by \cite[Theorem
        2.2]{Iar94}, the vector $h-\delta$ admits a unique symmetric decomposition,   say $(\DD(0),\DD(1),\ldots,\DD(s-2)).$  Then 
        \[
        \mathfrak{D}=(\DD(0),\DD(1),\ldots,\DD(a-1),\DD(a)+\delta,\DD(a+1),\ldots,\DD(s-2))
        \]
        is the unique symmetric decomposition of $R/I.$
        
        Proof of~\eqref{it:partb} and~\eqref{it:partc}: let $I$ be a
        Gorenstein ideal with the Hilbert function $h.$ Let $f,g,p,$
        $T,~\sigma$ be as in the proof of  \eqref{Prop:UniqueDecoOfCI}.  After
        adding suitable multiples of  $f,g,p$, we may assume that
        $\sigma=\sigma_0+\lambda z$, where $\sigma_0 \in K[\![x,y]\!]$ and $\lambda \in K.$ If $\lambda =0, $ then the same argument as in   \eqref{Prop:UniqueDecoOfCI}  shows that $G \notin \langle F\rangle, $ and $Q(a)_1 \neq 0$ where $a=s-1-k$ and $k$ is the peak position of $h.$ Hence $R/I$ has decomposition $\mathfrak{D}.$
        
        Suppose $\lambda\neq 0.$ Let $A=R/I$ and $\m_A$ denote the maximal
        ideal of $A.$ In this case, $\sigma\mod I \in \m_A \setminus \m_A^2.$
        Also,  $\sigma \mod I \in (0:\m_A^{2}).$  Hence $\sigma\mod I$ is a nonzero element of $Q(s-2)_1.$

We claim that in this case $\Delta(h)=1.$ Recall from the proof of Proposition \ref{Prop:classificationOfArtinianGorenstein} that $F:=T_0$ and $G:=T_1$ are the required elements in $K_{DP}[X,Y]$ in  Proposition \ref{Prop:classificationOfArtinianGorenstein}.   Since $\sigma \circ T=Z$,  using \eqref{eq:TformQdecom} we get
\[
\sigma_0 \circ F + \lambda G + Z(\sigma_0 \circ G+ \lambda T_2) + \cdots =Z.   
\]
Therefore, $\sigma_0 \circ F + \lambda G=0$ and hence $G \in \langle F\rangle.$ Thus $h_{I_F}:=\HF_{S/\ann_S(F)}$ and $h_J:=\HF_{S/\ann_S(F,G)}$ are the same.  Since $\Delta(h_{I_F})=1$ by  \eqref{Eqn:codimension2Gsequence},  $\Delta(h_J)=1.$ As $h$ and $h_J$ are the same except at position one,  $\Delta(h)=1.$ 

Therefore in this case we have the following symmetric decomposition.  Let $\delta'=(0,1).$ The sequence $h-\delta' \geq 0$ point-wise.  In fact,  $h-\delta'$ is a codimension two O-sequence with  $\Delta(h-\delta')=1.$ Hence $h-\delta'$ is a Gorenstein sequence by \eqref{Eqn:codimension2Gsequence}.  Therefore by Theorem \cite[Theorem 2.2]{Iar94} $h-\delta'$ admits a unique symmetric decomposition, say $(\DD(0),\DD(1),\ldots,\DD(s-2)).$ Then 
$$(\DD(0),\DD(1),\ldots,\DD(s-2)+\delta')$$ is the symmetric decomposition of $h.$

It is clear that if $F'\in K_{DP}[X,Y]$ and $A_{F'}$ has the Hilbert function $h-\delta',$ then $A_F$ has the decomposition $\mathfrak{D}'$ where $F=F'+Z^2.$
 \end{proof}

We illustrate the above result by the following example.

\begin{example}
 \label{Example:IarQue2}
  Let $h=(1,3,3,4,3,2,1).$  Then $h$ is admissible for a complete intersection by Theorem \ref{Equi:GorAndCIIntro}.  In fact, let 
  $F=X^4Y^2$ and $G=Y^3.$ Then $F$ and $G$ satisfy the conditions of Proposition \ref{Prop:PropertiesOfJ}.  Also, $J=\ann_S(F,G)=(x^5,y^4,xy^3)$ has  
the syzygy matrix
  \[
 \begin{bmatrix}
         y^3 & 0 & -x^4\\
         0 & -x & y
     \end{bmatrix}^{t}.
 \]
We take $U=0,V=-x^4,W=-y^3$. Then
  $I=(xz,yz+x^4,z^2+y^3)$ is a complete intersection ideal with the Hilbert function $h.$ 
  By \cite[p.94]{Iar94} the function $h$ admits two symmetric decompositions.  In fact,  each symmetric decomposition is realizable.  
  By Proposition \ref{Prop:Q-decOf1334}\eqref{Prop:UniqueDecoOfCI}, Hilbert
  function of $R/I$ has the decomposition
  \[
 \DD= (\DD(0)=(1,2,3,3,3,2,1),\DD(2)=(0,1,0,1)).
  \]
The other symmetric decomposition of $h$,  namely
 \[
\mathfrak{D}'= (\DD(0)=(1,2,3,4,3,2,1),\DD(4)=(0,1)),
 \]
  is realized by $A_F$ where $F=X^3Y^3+Z^2.$ By Proposition
  \ref{Prop:Q-decOf1334} the decomposition $\mathfrak{D}'$ is not realizable by a complete intersection ideal.
  
 \end{example}
 
\begin{proof}[Proof of Theorem \ref{thm:Q-deco-133}] This follows from Propositions \ref{Prop:Q-decOf133} and \ref{Prop:Q-decOf1334}.
\end{proof}

\section*{Acknowledgements}
We thank Prof.~Tony Iarrobino for detailed comments on an earlier version of the draft. 
In particular,  we thank him for suggesting the problem on realizable symmetric decomposition 
of $(1,3,3)$ sequences. We also thank the anonymous referee, whose numerous
helpful comments improved the presentation considerably.

\newcommand{\etalchar}[1]{$^{#1}$}

\end{document}